\newtheorem{theorem}{Theorem}[section]
\newtheorem{lemma}[theorem]{Lemma}
\newtheorem{assumption}{Assumption}
\theoremstyle{definition}
\newtheorem{definition}[theorem]{Definition}
\theoremstyle{remark}
\newtheorem{remark}[theorem]{Remark}
\newtheorem{example}[theorem]{Example}
\numberwithin{equation}{section}
\begin{document}
\def\Pro{{\mathbb{P}}}
\def\E{{\mathbb{E}}}
\def\e{{\varepsilon}}
\def\veps{{\varepsilon}}
\def\ds{{\displaystyle}}
\def\nat{{\mathbb{N}}}
\def\Dom{{\textnormal{Dom}}}
\def\dist{{\textnormal{dist}}}
\def\R{{\mathbb{R}}}
\def\O{{\mathcal{O}}}
\def\T{{\mathcal{T}}}
\def\Tr{{\textnormal{Tr}}}
\def\sgn{{\textnormal{sign}}}
\def\I{{\mathcal{I}}}
\def\A{{\mathcal{A}}}
\def\H{{\mathcal{H}}}
\def\S{{\mathcal{S}}}

\title{Global solutions to the stochastic reaction-diffusion equation with superlinear accretive reaction term and superlinear multiplicative noise term on a bounded spatial domain}%
\author{M. Salins\\ Boston University \\ msalins@bu.edu}
\maketitle

\begin{abstract}
We describe sufficient conditions on the reaction terms and multiplicative noise terms of a stochastic reaction-diffusion equation that guarantee that the solutions never explode. Both the reaction term and multiplicative noise terms are allowed to grow superlinearly.
\end{abstract}

%
%

\section{Introduction}
We describe sufficient conditions on the reaction terms $f:\mathbb{R} \to \mathbb{R}$ and multiplicative noise terms $\sigma: \mathbb{R} \to \mathbb{R}$ that guarantee that the mild solutions to the stochastic reaction-diffusion never explode. The stochastic reaction-diffusion equation is defined on an open bounded spatial domain $D \subset \mathbb{R}^d$ with sufficiently smooth boundary
\begin{equation} \label{eq:SPDE}
  \begin{cases}
    \frac{\partial u}{\partial t} (t,x) = \mathcal{A} u(t,x)  + f(u(t,x)) + \sigma(u(t,x)) \dot{w}(t,x)\\
    u(t,x) = 0 , \ \ \ \ x \in \partial D\\
    u(0,x)  = u_0(x).
  \end{cases}
\end{equation}
In the above equation, $\mathcal{A}$ is a second-order elliptic operator, and $\dot{w}$ is a sufficiently regular stochastic noise that is white in time and generally correlated in space.

The global well-posedness of the mild solution to \eqref{eq:SPDE} is well-known when $f$ and $\sigma$ are both globally Lipschitz continuous with at most linear growth \cite{walsh, dpz,d-1999, k-1992}. Many authors have investigated the well-posedness of such an equation when $f$ is not Lipschitz continuous, but features some monotonicity condition \cite{c-2003,pr,gp-1993,mr-2010} or ``half-Lipschitz'' condition where $f$ can be written as the sum of a decreasing function and a line. For example, when $f$ is an odd-degree polynomial with negative leading term such as $f(u) = -u^3 + u$, then $\limsup_{|u| \to +\infty} \frac{f(u)}{u} = -\infty$. Such a strongly dissipative force $f$ pushes solutions toward the finite part of the space, which prevents explosion if $\sigma$ is not too strong.

In this paper, we are most interested in the case of superlinear accretive forcing
\begin{equation}
  \liminf_{|u| \to \infty} \frac{f(u)}{u} = +\infty
\end{equation}
and superlinear multiplicative noise
\begin{equation}
  \liminf_{|u| \to \infty} \frac{|\sigma(u)|}{|u|} = +\infty.
\end{equation}
Unlike in the cases of dissipative forcing terms, a superlinear accretive force pushes solutions towards $\pm \infty$. The central question is whether these forces cause the solutions to explode in finite time, or whether the solution is global in time.

For a deterministic one-dimensional ordinary differential equation (ODE) with a positive force $f$,
\begin{equation} \label{eq:intro-ODE}
  \frac{d}{dt} v(t)  = f(v(t)), \ \ \ v(0) = c>0
\end{equation}
the so-called ``Osgood condition'' characterizes whether solutions explode  in finite time. Solutions to this ODE explode in finite time if and only if
\begin{equation}
  \int_c^\infty \frac{1}{f(v)}dv < +\infty.
\end{equation}

Recently, many researchers have investigated finite-time explosion for SPDEs. Bonder and Groisman proved that in the case of a one-dimensional interval spatial domain and space-time white noise, whenever $\int_c^\infty 1/f(u)du<+\infty$ for some $c>0$ and $\sigma$ is a positive constant, solutions to \eqref{eq:SPDE} explode in finite time \cite{bg-2009}. Foondun and Nualart generalized Bonder and Groisman's result to higher-dimensional spatial domains and different kinds of stochastic forcing, and demonstrated that, along with a few additional technical conditions,  the Osgood condition fully characterized explosion \cite{fn-2021} . Solutions never explode if $\int_c^\infty 1/f(u)=+\infty$ for all $c>0$ and solutions explode in finite time if $\int_c^\infty 1/f(u)du < +\infty$ for some $c>0$. Foondun and Nualart's investigation considered only the case of bounded multiplicative noise term $\sigma$.

Previously, Mueller and collaborators investigated explosion properties of the one-dimensional stochastic heat equation when $\mathcal{A}= \frac{\partial^2}{\partial x^2}$, $\dot{w}$ is space-time white noise, the forcing term $f\equiv 0$, and $\sigma$ grows polynomially $\sigma(u) \approx |u|^\gamma$ \cite{m-1991,ms-1993,m-1997,m-2000}. In this case, the dissipativity of the differential operator $\frac{\partial^2}{\partial x^2}$ counteracts some of the expansive properties of the multiplicative stochastic perturbations, and the solutions are global in time if $\gamma< \frac{3}{2}$ and the solutions explode in finite time if $\gamma>\frac{3}{2}$ \cite{m-2000}. Recent investigations proved that if $f$ is strongly dissipative, such as $f(u) = -|u|^{m-1}u$ for large $m$, then $\sigma$ can grow even faster than $|u|^{\frac{3}{2}}$ and solutions will not explode \cite{s-2021}.

Dalang, Khoshnevian, and Zhang proved that solutions  to \eqref{eq:SPDE} can be global in time in some cases when the reaction term $f$ is accretive and superlinear and the multiplicative $\sigma$ term is also superlinear \cite{dkz-2019}. They considered the case of a stochastic heat equation defined on a one-dimensional spatial domain and perturbed by space-time white noise. In this setting they showed that if $|f(u)| \leq C(1 + |u|\log|u|)$ and $\sigma \in o( |u|(\log|u|)^{1/4})$, then solutions never explode. They used two different approaches in their paper: a log-Sobolev approach, and what they call the $L^\infty$ approach. Unfortunately, the $|u|\log|u|$ growth restriction is much more restrictive than the Osgood condition identified by \cite{bg-2009,fn-2021}.

Similar explosion properties have been investigated for the stochastic wave equation with superlinear forcing \cite{ms-2021,fn-2020}.

The main result of this paper is an extension of results from both {\cite{fn-2021}} and \cite{dkz-2019}. We prove that the Osgood condition on the reaction term, identified by {\cite{fn-2021}} in the case of bounded multiplicative noise coefficients $\sigma$, also guarantees that solutions never explode when the multiplicative noise terms are superlinear and satisfy a restriction similar to the one identified by \cite{dkz-2019}.

Assumption \ref{assum:f-sigma}, below, requires that $f$ and $\sigma$ are locally Lipschitz continuous and that there exists an increasing $h: (0,+\infty) \to (0,+\infty)$ such that
 \begin{equation} \label{eq:intro-osgood}
   \int_c^\infty \frac{1}{h(u)}du = +\infty \text{ for some } c>0
 \end{equation}
 and a constant $\gamma \in \left(0, \frac{1-\eta}{2} \right)$ such that
\begin{equation} \label{eq:intro-bound}
  |f(u)| \leq h(|u|)   \ \ \ \ \text{ and } \ \ \ \ |\sigma(u)| \leq |u|^{1-\gamma}(h(|u|))^\gamma \ \ \ \text{ for } |u|>1.
\end{equation}
The number $\eta \in [0,1)$ is a constant identified by Cerrai \cite{c-2009} and defined below in Assumption \ref{assum:noise} that describes a balance between the eigenvalues of $\mathcal{A}$ and the spatial regularity of the noise.  When $\eta \in [0,1)$, solutions to the stochastic reaction-diffusion equation are continuous  in space and  time. Trace-class noise implies that $\eta =0$ and in the case of the one-dimensional heat equation with space-time white noise $\eta> \frac{1}{2}$. This means that in the space-time white noise setting $\gamma$ can be any number in $(0,1/4)$. When $h(u)=u\log u$, \eqref{eq:intro-bound} says that  $\sigma(u)$ can grow as fast as $|u|(\log|u|)^{\gamma}$ for any $\gamma \in (0,1/4)$, which is very similar to, but not the same as, the $o(|u| (\log|u|)^{1/4})$  constraint identified in \cite{dkz-2019}. The result in this article also holds in arbitrary spatial dimension and with different kinds of stochastic forcing.

This Osgood-type condition \eqref{eq:intro-osgood}--\eqref{eq:intro-bound} allows us to consider forces $f$ that grow much faster than $|u|\log|u|$. For example,
\begin{equation}
  f(u) =   u\log(e^e + |u|)\log\log(e^e + |u|)
\end{equation}
satisfies the assumptions \eqref{eq:intro-osgood}--\eqref{eq:intro-bound}, but grows too fast to be covered by the assumptions of \cite{dkz-2019}. {In the particular setting of a one-dimensional heat equation exposed to space-time white noise,} our restriction on the superlinear growth rate of the multiplicative noise term $\sigma$ is neither fully weaker or stronger than the condition identified in \cite{dkz-2019}. There are examples of $\sigma$ that satisfy the assumptions of \cite{dkz-2019}, but fail to satisfy the assumptions of the current paper and there are also $\sigma$ that satisfy the assumptions of the current paper but fail to satisfy the assumptions of \cite{dkz-2019}. Furthermore, it is doubtful that the methods used in this paper can be used to prove the results for every case covered by the assumptions \cite{dkz-2019}, suggesting that both methods are needed to understand these explosion phenomena. A full discussion of these examples is in Section  \ref{S:examples}.

The proof of the global existence and uniqueness of solutions to the reaction-diffusion equation in this paper is quite different than all of the methods used in \cite{dkz-2019} and \cite{fn-2020}, but similar in some ways to the arguments used by Mueller \cite{m-1991}.

To motivate the method used to prove global existence of mild solutions to \eqref{eq:SPDE}, consider  the following non-standard proof of global existence for the ODE \eqref{eq:intro-ODE} under the additional assumption that $f$ satisfies \eqref{eq:intro-osgood}--\eqref{eq:intro-bound}. Let $v(t)$ denote the solution to the ODE \eqref{eq:intro-ODE} and define the times $t_n := \inf\{t>0: v(t) = 2^n\}$. For large $n$, $v$ will satisfy the expression
\begin{align*}
  2^n &= v(t_n)
  = v(t_{n-1}) + \int_{t_{n-1}}^{t_n} f(v(s))ds.
\end{align*}
Because $v(t_{n-1}) = 2^{n-1}$ and $f(v(s)) \leq h(|v(s)|) \leq h(2^n)$ for $s \in [t_{n-1},t_{n}]$, it holds that
\begin{equation*}
  2^n \leq 2^{n-1} + (t_n - t_{n-1})h(2^n),
\end{equation*}
which simplifies to
\begin{equation*}
  \frac{2^{n-1}}{h(2^n)} \leq t_n -t_{n-1}.
\end{equation*}
This expression is enough to imply that the solution to the ODE \eqref{eq:intro-ODE} cannot explode in finite time because for $N$ large enough that $2^{N-1}>v(0)$,
\[\lim_{n \to \infty} t_n \geq \sum_{n=N}^\infty (t_n - t_{n-1}) \geq \sum_{n=N}^\infty \frac{2^{n-1}}{h(2^n)}=+\infty.\]
The above sum is divergent because $h$ is increasing and $2^{n-1} = 2^n - 2^{n-1}$, implying that
\[\sum_{n=N}^\infty\frac{2^{n-1}}{h(2^n)} \geq \sum_{n=N}^\infty\frac{2^n - 2^{n-1}}{h(2^n)} \geq \sum_{n=N}^\infty \int_{2^{n-1}}^{2^n} \frac{1}{h(x)}dx \geq \int_{2^{N-1}}^\infty \frac{1}{h(x)}dx\]
which is $+\infty$ by assumption \eqref{eq:intro-osgood}.

We use a similar method to prove global existence of mild solutions to the SPDE \eqref{eq:SPDE}. The mild solution of \eqref{eq:SPDE} is the solution of the integral equation
  \begin{align} \label{eq:intro-mild}
    u(t) = &S(t) u(0) + \int_{0}^{t} S(t -s) f(u( s))ds + \int_{0}^{t } S( t-s) \sigma(u(s))dw(s) \nonumber\\
  \end{align}
  where $S(t)$ is the semigroup generated by $\mathcal{A}$ and where the spatial variable has been suppressed.
Because this mild solution has three terms, it is convenient to define a sequence of stopping times $\tau_n = \inf\{t>0: |u(t)|_{L^\infty} \geq 3^n\}$ that record the time required for the spatial $L^\infty$ norm of the solutions to triple.

 The first two terms of the right-hand-side of \eqref{eq:intro-mild} are easily controlled using properties of elliptic semigroups and arguments similar to those used in the ODE case. Most of the analysis in this article focuses on the stochastic convolution term. We rely on a moment estimate on the stochastic convolution, which is due to Cerrai \cite{c-2003,c-2009}. This moment estimate is stated below as Lemma \ref{lem:moment} and its proof is included in the appendix of the paper. Lemma \ref{lem:moment} claims that for $\zeta \in (0, 1-\eta)$ and large enough $p>1$, there exists $C=C(p,\zeta)>0$ such that whenever $\Phi(t,x)$ is an adapted process that is almost surely bounded by a constant $M$ and for any $\e>0$,
 \begin{equation} \label{eq:intro-moment}
   \E \sup_{t \in [0,\e]} \left|\int_0^t S(t-s)\Phi(s)dw(s) \right|_{L^\infty}^p \leq CM^p \e^{\frac{p(1-\eta-\zeta)}{2}}.
 \end{equation}
 This moment bound tells us that the stochastic integral is $\gamma$-H\"older continuous in time for any $\gamma \in \left( 0, \frac{1-\eta}{2} \right)$. Heuristically, the growth rate restrictions \eqref{eq:intro-bound} are related to the regularity in time of the Lebesgue and stochastic integrals. The Lipschitz continuity of the Lebesgue integral $t \mapsto \int_0^t S(t-s)f(u(s))ds$ means that $f$ can grow as fast as any $h$ satisfying \eqref{eq:intro-osgood}, while the $\gamma$-H\"older continuity of the stochastic integral means that $\sigma$ can only grow as fast as $|u|^{1-\gamma}(h(|u|))^\gamma$.

Using  moment estimates like \eqref{eq:intro-moment} and the Borel-Cantelli Lemma, we can show that eventually the times $(\tau_n-\tau_{n-1})$ required for the $L^\infty$ norm of $u$  to triple have the property that $(\tau_{n}-\tau_{n-1})\geq a_n$ for a non-random, explicit  sequence such that $\sum a_n =+\infty$.  Specifically, $a_n = \min \left\{\frac{3^{n-1}}{h(3^n)},\frac{1}{n} \right\}$. Therefore $\lim_n \tau_n=+\infty$ with probability one and the mild solutions to \eqref{eq:SPDE} never explode.

This method is both straightforward and also quite adaptable. Similar arguments could be applied to reaction-diffusion equations with spatially homogeneous noise, equations with time-dependent differential operators, stochastic wave equations, or equations with fractional differential operators or fractional stochastic noises.  For simplicity, we focus on the reaction-diffusion equation with noise that is white in time and possibly correlated in space so that we can use BDG and Ito isometry arguments.

The assumptions are outlined in Section \ref{S:assum}. Several examples of admissible superlinear forcing terms are presented in Section \ref{S:examples} and the results are contrasted with those of \cite{dkz-2019}. The tripling times are identified in Section \ref{S:times}. The proof of the main result is in Section \ref{S:proof}. An appendix collects some known results about the moments of stochastic convolutions.

\section{Assumptions and the main result} \label{S:assum}
For $p \geq 1$, let $L^p = L^p(D)$ denote the Banach space of $\phi:D \to \mathbb{R}$ such that the norm
\begin{equation}
  |\phi|_{L^p} : = \left(\int_D |\phi(x)|^pdx \right)^{\frac{1}{p}}< +\infty.
\end{equation}
The $L^\infty$ norm is defined to be the supremum norm
\begin{equation}
  |\phi|_{L^\infty}: = \sup_{x \in D} |\phi(x)|.
\end{equation}
Let $C(\bar D)$ denote the closed subset of $\phi \in L^\infty$ such that $\phi: \bar D \to \mathbb{R}$ is continuous.

We now introduce the main assumptions of this article. The elliptic operator $\mathcal{A}$ in \eqref{eq:SPDE} regularizes the solutions while the stochastic perturbations $\dot{w}$ make the solutions more rough. Assumptions \ref{assum:A} and \ref{assum:noise}, below, were first formulated by Cerrai \cite{c-2003,c-2009}. These assumptions guarantee a kind of balance between the regularization of $\mathcal{A}$ and the roughness of $\dot{w}$ to guarantee that the mild solutions to \eqref{eq:SPDE} are continuous in space and time.  Importantly, Assumptions \ref{assum:A} and \ref{assum:noise} imply Lemma \ref{lem:moment}, which establishes moment bounds on the stochastic convolution terms of the mild solution.

\begin{assumption} \label{assum:A}
  $\mathcal{A}$ is a second-order elliptic operator. Without loss of generality, we assume that $\mathcal{A}$ is self-adjoint (see \cite[Section 3]{c-2003} for discussion of why there is no loss of generality)
  \begin{equation}
    \mathcal{A} g(x) = \sum_{i=1}^d \sum_{j=1}^d \frac{\partial }{\partial x_j} \left(  a_{ij}(x) \frac{\partial g}{\partial x_i}(x) \right).
  \end{equation}
  In the above expression the $a_{ij}$ are twice continuously differentiable. The matrix $(a_{ij})$ is elliptic in the sense that there exists $c>0$ such that for any $x, \xi \in \mathbb{R}^d$,
  \begin{equation}
    \sum_{i=1}^d \sum_{j=1}^d a_{ij}(x) \xi_i \xi_j \geq c \sum_{i=1}^d \xi_i^2.
  \end{equation}

  We assume that $D$ and its boundary are regular enough so that there exists a complete orthonormal basic of $L^2(D)$ made of eigenvalues for $\mathcal{A}$ with the imposed Dirichlet boundary conditions. Specifically, there exist $0 \leq \alpha_k \leq \alpha_{k+1}$ and $e_k \in L^2(D)$ such that
  \begin{equation}
    \mathcal{A} e_k(x) = -\alpha_k e_k.
  \end{equation}
  By elliptic regularity, $e_k \in L^\infty(D)$ \cite[Chapter 6.5]{evans}.
  We also assume that $D$ is regular enough for the fractional Sobolev embedding theorem to be valid \cite{sobolev}.
\end{assumption}
\begin{remark}
  This theory will also work for other kinds of boundary conditions. See \cite{c-2003}
\end{remark}

\begin{assumption} \label{assum:noise}
  Formally the noise is the sum
  \begin{equation}
    dw(t,x) = \sum_{j=1}^\infty \lambda_j e_j(x)d\beta_j(t)
  \end{equation}
  where $\beta_j$ is a sequence of independent one-dimensional Brownian motions defined on a probability space $(\Omega, \mathcal{F}, \Pro)$, and $e_j$ are the eigenfunctions of $\mathcal{A}$. Furthermore, there exist $q \in [2,+\infty]$ and $\theta \in (0,1)$ such that the sequence $\lambda_j\geq0$ satisfies
  \begin{equation}
    \sum_{j=1}^\infty \lambda_j^q |e_j|_{L^\infty}^2 <+\infty, \text{ if } q<+\infty \ \  \ \text{ or } \ \ \
    \sup_j \lambda_j< +\infty \text{ if } q = +\infty,
  \end{equation}
  \begin{equation}
    \sum_{k=1}^\infty \alpha_k^{-\theta} |e_k|_{L^\infty}^2 < +\infty
  \end{equation}
  and
  \begin{equation} \label{eq:cerrai-constant}
    \eta: = \frac{\theta(q-2)}{q}<1, \ \ \ \ \  \left(\eta := \theta <1  \text{  if  } q = +\infty.\right).
  \end{equation}
\end{assumption}

\begin{assumption} \label{assum:f-sigma}
  The reaction term $f:\mathbb{R} \to \mathbb{R}$ and the multiplicative noise term $\sigma:\mathbb{R} \to \mathbb{R}$ have the following properties
  \begin{itemize}
    \item $f$ is locally Lipschitz continuous,
    \item $\sigma$ is locally Lipschitz continuous,
    \item There exists a positive, increasing $h:[0,+\infty) \to [0,+\infty)$  that satisfies
        \begin{equation} \label{eq:h-osgood}
          \int_c^\infty \frac{1}{h(u)}du = +\infty \text{ for all } c>0.
        \end{equation}
         such that
        \begin{equation}
          |f(u)| \leq h(|u|) \text{ for } u \in \mathbb{R}
        \end{equation}
        and there exists $\gamma \in \left( 0, \frac{1-\eta}{2}\right)$ where $\eta$ is defined by \eqref{eq:cerrai-constant} such that
        \begin{equation}
          |\sigma(u)| \leq |u|^{1-\gamma}(h(|u|))^\gamma \text{ for }  |u|>1.
        \end{equation}
  \end{itemize}
\end{assumption}

\begin{remark}
  Notice that if $h$ is superlinear, then $u \mapsto |u|^{1-\gamma}(h(|u|))^\gamma$ is superlinear too.
\end{remark}

\begin{assumption} \label{assum;init-data}
  The initial data $u_0: \bar{D} \to \mathbb{R}$ is uniformly bounded.
\end{assumption}

\begin{definition}
A random field $u(t,x)$ is called a \textit{local} mild solution to \eqref{eq:SPDE} if $u(t,x)$ is a solution to the integral equation
\begin{equation} \label{eq:mild}
  u(t) = S(t)u_0 + \int_0^t S(t-s)f(u(s))ds + \int_0^t S(t-s)\sigma(u(s))dw(s).
\end{equation}
for all $t<T_n := \inf\{t>0: |u(t)|_{L^\infty} \geq n\}$ for all $n$.
In the above expression, the spatial variable $x$ has been suppressed (see \cite{dpz}). $S(t)$ is the $C_0$ semigroup generated by $\mathcal{A}$ with the imposed boundary conditions.
\end{definition}

\begin{definition}
  A mild solution is a  \textit{global solution} if \eqref{eq:mild} holds for all $t\geq0$ with probability one. Equivalently, a mild solution is a  \textit{global solution} if $\lim_{ n \to \infty} T_n=+\infty$ with probability one.
\end{definition}

The main result of this article is Theorem \ref{thm:main}.

\begin{theorem} \label{thm:main}
  Under Assumptions \ref{assum:A}--\ref{assum;init-data}, the there exists a unique mild solution $u$ solving \eqref{eq:mild} and the mild solution is global.
\end{theorem}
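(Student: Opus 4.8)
The plan is to obtain a unique local mild solution by a standard fixed-point argument and then rule out explosion. Because $f$ and $\sigma$ are locally Lipschitz, a Picard iteration in the space of $C(\bar D)$-valued adapted processes, stopped at the exit times $T_n=\inf\{t:|u(t)|_{L^\infty}\ge n\}$, produces a unique local mild solution up to $T_\infty=\lim_n T_n$, and patching these gives global uniqueness. Everything then reduces to showing $\lim_n T_n=+\infty$ almost surely, equivalently $\lim_n\tau_n=+\infty$ for the tripling times $\tau_n=\inf\{t:|u(t)|_{L^\infty}\ge 3^n\}$. I will show that, almost surely, $\tau_n-\tau_{n-1}\ge a_n$ for all large $n$, where $a_n=\min\{3^{n-1}/h(3^n),\,1/n\}$, and that $\sum_n a_n=+\infty$.

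To analyze a single tripling interval I would restart the mild solution at the stopping time $\tau_{n-1}$: using the semigroup property $S(t)=S(t-\tau_{n-1})S(\tau_{n-1})$ and the strong Markov property of the noise, for $t\ge\tau_{n-1}$ the solution satisfies
\begin{equation*}
u(t)=S(t-\tau_{n-1})u(\tau_{n-1})+\int_{\tau_{n-1}}^{t}S(t-s)f(u(s))\,ds+\int_{\tau_{n-1}}^{t}S(t-s)\sigma(u(s))\,dw(s).
\end{equation*}
Since $\mathcal{A}$ with Dirichlet conditions generates a sub-Markovian semigroup, $S(\cdot)$ is an $L^\infty$-contraction, so the first term is bounded by $|u(\tau_{n-1})|_{L^\infty}=3^{n-1}$. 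On $[\tau_{n-1},\tau_n]$ we have $|u(s)|_{L^\infty}\le 3^n$, and because $h$ is increasing $|f(u(s))|_{L^\infty}\le h(3^n)$; hence for $t-\tau_{n-1}\le a_n\le 3^{n-1}/h(3^n)$ the drift term is bounded by $a_n h(3^n)\le 3^{n-1}$. These deterministic bounds sum to $2\cdot 3^{n-1}<3^n$, so on the event $\{\tau_n-\tau_{n-1}<a_n\}$ the stochastic convolution must carry the remaining mass: at $t=\tau_n$ its $L^\infty$ norm is at least $3^n-2\cdot3^{n-1}=3^{n-1}$.

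The crucial estimate controls this stochastic convolution. I would freeze the integrand at $\tau_n$, setting $\Phi_n(s)=\sigma(u(s\wedge\tau_n))$, which is bounded by $M_n=(3^n)^{1-\gamma}(h(3^n))^{\gamma}$ for all $s$ by the growth bound on $\sigma$ and which agrees with $\sigma(u(s))$ for $s\le\tau_n$. Shifting time by $\tau_{n-1}$ and invoking Lemma \ref{lem:moment} for a fixed $\zeta\in(0,1-\eta-2\gamma)$ and large $p$ — note $\gamma<\beta:=\tfrac{1-\eta-\zeta}{2}$ by this choice of $\zeta$, which is possible precisely because $\gamma<\tfrac{1-\eta}{2}$ — together with Markov's inequality yields
\begin{equation*}
\Pro(\tau_n-\tau_{n-1}<a_n)\le \frac{C\, M_n^{p}\,a_n^{p\beta}}{(3^{n-1})^{p}}.
\end{equation*}
Splitting $a_n^{\beta}=a_n^{\gamma}\,a_n^{\beta-\gamma}$ and using the two defining bounds $a_n\le 3^{n-1}/h(3^n)$ for the first factor and $a_n\le 1/n$ for the second collapses all powers of $3^n$ and $h(3^n)$, leaving $\Pro(\tau_n-\tau_{n-1}<a_n)\le C\,3^{(1-\gamma)p}\,n^{-(\beta-\gamma)p}$. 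Since $\beta-\gamma>0$, choosing $p>1/(\beta-\gamma)$ makes these probabilities summable, so the Borel–Cantelli lemma gives $\tau_n-\tau_{n-1}\ge a_n$ for all large $n$ almost surely.

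Finally, $\lim_n\tau_n\ge\sum_n a_n$, and I would verify $\sum_n a_n=+\infty$ from the Osgood condition \eqref{eq:h-osgood} together with the monotonicity of $h$: where the harmonic truncation is active the sum dominates a tail of $\sum 1/n$, while where $a_n=3^{n-1}/h(3^n)$ the comparison with $\int 1/h$ forces divergence. I expect the main obstacle to be the rigorous handling of the stopped stochastic convolution across the stopping time $\tau_{n-1}$ — justifying the time shift and the freezing of the integrand so that Lemma \ref{lem:moment} applies with the deterministic bound $M_n$, given that the kernel $S(t-s)$ couples distinct times and must be reconciled with the stopping. The second delicate point is confirming $\sum_n a_n=+\infty$ for a general increasing $h$ satisfying only \eqref{eq:h-osgood}.
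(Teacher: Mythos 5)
Your proposal is correct and follows essentially the same route as the paper: the same tripling times and sequence $a_n=\min\{3^{n-1}/h(3^n),1/n\}$, the same three-term decomposition with the $L^\infty$-contraction bound $3^{n-1}$ on the semigroup term and the bound $a_n h(3^n)\le 3^{n-1}$ on the drift, the same truncation of the stochastic integrand so that Lemma \ref{lem:moment} applies with $M_n=3^{n(1-\gamma)}(h(3^n))^{\gamma}$, and the same Chebyshev--Borel--Cantelli conclusion with the identical exponent bookkeeping. The two delicate points you flag are treated in the paper exactly as you anticipate: the restart at $\tau_{n-1}$ is handled by working with the globally Lipschitz truncated solutions $u_n$ and the cutoff $\sigma_n$ (so $\tilde Z_n=Z_n$ up to $\tau_n$), and the divergence $\sum_n a_n=+\infty$ by comparing $\sum 3^{n-1}/h(3^n)$ with $\int 1/h$ and a condensation argument.
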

\section{Examples and comparison with \cite{dkz-2019}} \label{S:examples}
In \cite{dkz-2019}, the authors consider the case of a one-dimensional heat equation driven by space-time white noise.
\begin{equation}
  \begin{cases}
    \frac{\partial u}{\partial t}(t,x) = \frac{\partial^2 u}{\partial x^2} (t,x) + f(u(t,x)) + \sigma(u(t,x))\dot{w}(t,x)\\
    u(t,0) = u(t,\pi) = 0.
  \end{cases}
\end{equation}
The spatial domain is $D=[0,1]$. The eigenvalues of the elliptic operator $\mathcal{A} = \frac{\partial^2}{\partial x^2}$ are $e_k(x) = \sqrt{2} \sin(k\pi x)$ and the eigenvalues are $-\alpha_k = -\pi^2 k^2$.

In the language of our Assumption \ref{assum:noise}, white noise means that $\lambda_j\equiv 1$. Consequently $q = +\infty$ and $\theta \in (\frac{1}{2},1)$ can be chosen arbitrarily close to $\frac{1}{2}$ so that $\sum k^{-2\theta}$ converges. This implies that $\eta\in \left(\frac{1}{2},1\right)$. The exponent $\gamma$ in Assumption \ref{assum:f-sigma} can be arbitrarily close to $\frac{1-\eta}{2}$ so $\gamma < \frac{1}{4}$. In \cite{dkz-2019}, they assume that $|f(u)|\leq C(1 + |u|\log|u|)$ for some $C>0$ and that $\sigma \in o(|u|(\log|u|)^{\frac{1}{4}})$.

First, we demonstrate that Assumption \ref{assum:f-sigma} holds for a larger class of reaction terms $f$ than the assumptions of \cite{dkz-2019}. First, observe that $h_1(u) = C (1 + |u|\log|u|)$ satisfies Assumption \ref{assum:f-sigma} because $\int \frac{1}{h_1(u)}du = +\infty$. As Example \ref{ex:1} demonstrates below, Assumption \ref{assum:f-sigma} allows for $f$ terms that grow much faster than $|u|\log|u|$. 

\begin{example} \label{ex:1}
  Define
  \begin{align}
    &L_1(u) = \log(u)\\
    &L_{n+1}(u) = \log(L_n(u))\\
    &E_1 = \exp(1)\\
    &E_{n+1} = \exp(E_n).
  \end{align}
  Let $h_n(u) = (u + E_n)\prod_{k=1}^n L_k(u + E_n)$. The constants $E_n$ are added to guarantee that $L_k(u+E_n)\geq 1$. Direct calculations prove that $h_n$ is positive and increasing and that for any $c_1<c_2$,
  \begin{equation}
    \int_{c_1}^{c_2} \frac{1}{h_n(u)}du = L_{n+1}(c_2 + E_n) - L_{n+1}(c_1 + E_n).
  \end{equation}
  By letting $c_2 \uparrow +\infty$ we see that each $h_n$ satisfies \eqref{eq:h-osgood}. The reaction term $f(u) = h_n(|u|)$ satisfies our Assumption \ref{assum:f-sigma}, but grows much faster than $|u|\log|u|$.
\end{example}

Now we compare the Assumption \ref{assum:f-sigma} restrictions on $\sigma$ with those of \cite{dkz-2019}.
If $h_n$ is one of the examples from Example \ref{ex:1} and $\gamma \in (0,1/4)$, then any $\sigma$ satisfying
\begin{equation}
  |\sigma(u)| \leq |u|^{1-\gamma}(h_n(u))^\gamma
\end{equation}
has the property that $\sigma \in o(|u|(\log|u|)^{\frac{1}{4}})$ because
\begin{equation}
  \lim_{|u| \to \infty} \frac{(h_n(|u|))^\gamma}{|u|^{\gamma}(\log|u|)^{\frac{1}{4}}}=0.
\end{equation}

Example \ref{ex:2}, below, will show that there exists a $\sigma \in o(|u|(\log|u|)^{\frac{1}{4}})$ that fails to satisfy Assumption \ref{assum:f-sigma}. After that, Example \ref{ex:3} will prove that there exist $\sigma$ that satisfy Assumption \ref{assum:f-sigma} but $\sigma \not \in o(|u|(\log|u|)^{\frac{1}{4}})$. These two examples demonstrate that the restrictions on $\sigma$ identified in this paper are neither fully weaker or stronger than those in \cite{dkz-2019} in the case of a one-dimensional heat equation exposed to space-time white noise. The results of this article also allow for other types of elliptic operators and noise terms in arbitrary spatial dimensions.

\begin{example} \label{ex:2}
  $\sigma(u) = (e^e+  u) \frac{(\log(e^e + u))^{\frac{1}{4}}}{\log\log(e^e + u)}$ satisfies the assumptions of \cite{dkz-2019} because $\sigma \in o(u(\log(u))^\frac{1}{4})$, but violates the assumptions of this paper. The constant $\eta$ defined in \eqref{eq:cerrai-constant} can be any value $\eta \in (1/2,1)$ implying that $\frac{1-\eta}{2} \in (0, 1/4)$. If $h:[0,+\infty) \to [0,+\infty)$ is such that
  \begin{equation}
    |\sigma(u)| \leq    |u|^{1-\gamma}(h(|u|))^\gamma \text{ for } |u|>1
  \end{equation}
  for some $\gamma \in (0,1/4)$, then
  \begin{equation}
    h(u) \geq c(e^e + u) \frac{(\log(e^e + u))^{\frac{1}{4\gamma}}}{(\log\log(e^e + u))^{\frac{1}{\gamma}}}.
  \end{equation}
  Choose any $\delta \in \left( 1, \frac{1}{4\gamma} \right)$, then for large $u$,
  \begin{equation}
    h(u) > c (e^e + u) (\log(e^e + u))^\delta.
  \end{equation}
  Direct calculations show that any such $h$ has the property that for large $c>0$,
  \begin{equation}
    \int_c^\infty \frac{1}{h(u)}du < +\infty,
  \end{equation}
  which violates Assumption \ref{assum:f-sigma}.
\end{example}

On the other hand, the assumptions of this paper allow for certain $\sigma$ terms that violate the assumptions of  \cite{dkz-2019}.

\begin{example} \label{ex:3}
  Let $g: [0,+\infty) \to [0,+\infty)$ be an arbitrary increasing function that increases to $+\infty$. It is helpful to think of $g(u)=u^2$. Even though $g$ may have the property that
  \begin{equation}
    \int_1^\infty \frac{1}{g(u)}du < +\infty,
  \end{equation}
  it is always possible to construct a positive, increasing  $h: [0,+\infty) \to [0,+\infty)$ such that
  \begin{equation}
    \limsup_{u \to +\infty} \frac{h(u)}{g(u)} =1,
  \end{equation}
  but
  \begin{equation}
    \int_{c}^\infty \frac{1}{h(u)}du = +\infty.
  \end{equation}
  In other words, we can construct an $h$ and a sequence of $u_n \uparrow +\infty$ such that $h$ is increasing, $h(u_n) = g(u_n)$ for all $n \in \mathbb{N}$, but $h$ satisfies \eqref{eq:h-osgood} anyway. If $g$ is increasing and convex then the resulting $h$ is strictly increasing, but it cannot be convex.

  Define $u_1=1$ and $u_{n+1} = u_n + 1 + g(u_n)$. Define $h(x)$ so that $h(u_n) = g(u_n)$ for all $n \in \mathbb{N}$. For $u \in [u_n,u_{n+1}]$, define $h$ so that $\frac{1}{h(u)}$ is linear. The integral
  \begin{equation}
    \int_{u_n}^{u_{n+1}} \frac{1}{h(u)}du = \frac{1}{2} \left( u_{n+1} - u_n \right) \left(\frac{1}{g(u_n)} + \frac{1}{g(u_{n+1})} \right)
  \end{equation}
  is the area of a trapezoid. By the construction of $u_n$, the difference $u_{n+1} - u_n \geq g(u_n)$, proving that
  \begin{equation}
    \int_{u_n}^{u_{n+1}} \frac{1}{h(u)}du \geq \frac{1}{2} \text{ for any } n \in \mathbb{N}.
  \end{equation}

  Therefore, we have proved that
  \begin{equation}
  \int_{u_1}^\infty \frac{1}{h(u)}du = +\infty \text{ and } h(u_n) = g(u_n).
  \end{equation}

  Now let $\gamma \in \left( 0, \frac{1}{4} \right)$ and define
  \begin{equation}
    \sigma(u) = 1  + |u|^{ 1-\gamma} (h(|u|))^{\gamma}
  \end{equation}
  Such a $\sigma$ satisfies Assumption \ref{assum:f-sigma}. Consider the example where $g(u) = u^2$. In this case, there is a sequence of $u_n \uparrow +\infty$ such that $h(u_n) = u_n^2$ and $\sigma(u_n) = 1 + u_n^{1 + \gamma}$. This means that this $\sigma \not \in o(|u|(\log|u|)^{1/4})$.
\end{example}

\section{Tripling times -- local existence and uniqueness} \label{S:times}
The main idea of the proof is to define the tripling times
\begin{align}
  &\tau_n = \inf\{t>0: |u(t)|_{L^\infty}\geq 3^n\}.
\end{align}
Using these tripling times, we can prove local existence and uniqueness for the stochastic-reaction diffusion equations using standard arguments. Tripling times are convenient because the mild formulation \eqref{eq:mild} has three terms.

\begin{definition}
  Define the cutoff functions
  \begin{equation}
    f_n(u)
    = \begin{cases}
      f(-3^n) & \text{ if } u< -3^n\\
      f(u) & \text{ if } u \in [-3^n, 3^n]\\
      f(3^n) & \text{ if } u> 3^n
    \end{cases}
  \end{equation}
  and
  \begin{equation} \label{eq:sigman}
    \sigma_n(u)
    = \begin{cases}
      \sigma(-3^n) & \text{ if } u< -3^n\\
      \sigma(u) & \text{ if } u \in [-3^n, 3^n]\\
      \sigma(3^n) & \text{ if } u> 3^n
    \end{cases}
  \end{equation}
  Because $f$ and $\sigma$ are assumed to be locally Lipschitz continuous, $f_n$ and $\sigma_n$ are globally Lipschitz continuous.
\end{definition}
\begin{definition}
  Define $u_n$ to be the mild solution to the reaction-diffusion equation
  \begin{equation} \label{eq:u_n-mild}
    u_n(t) = S(t) u(0) + \int_0^t S(t-s)f_n(u_n(s))ds + \int_0^t S(t-s)\sigma_n(u_n(s))dw(s).
  \end{equation}
\end{definition}

By the results of \cite{c-2003,c-2009}, for each $n$, there exists a unique mild solution $u_n$. The local mild solution \eqref{eq:mild} can be uniquely defined as
\begin{equation} \label{eq:consistency}
  u(t) = u_n(t) \text{ for all } t < \tau_n.
\end{equation}

The main result of this paper is that $u(t)$ is a global solution.
We will prove that the solutions never explode by showing that
\begin{equation}
  \Pro \left(\sup_n \tau_n = +\infty \right) = 1.
\end{equation}
To this end, we define the real sequence
\begin{equation} \label{eq:an}
  a_n:= \min\left\{\frac{3^{n-1}}{h(3^{n})}, \frac{1}{n} \right\}
\end{equation}
where $h$ is the increasing function in Assumption \ref{assum:f-sigma}.
In the proof of the main result, we will demonstrate via the Borel-Cantelli Lemma that $\tau_n -\tau_{n-1}< a_n$ only a finite number of times.
Now we establish some important properties of $a_n$.
\begin{lemma}
  The real-valued sequence $a_n$ satisfies
  \begin{equation}
    \sum_{n=1}^\infty a_n = +\infty
  \end{equation}
\end{lemma}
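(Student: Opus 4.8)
The plan is to show that the two natural subseries of $\sum a_n$ cannot both converge. Recall first the two basic divergences. The harmonic series $\sum \frac1n = +\infty$ is standard. For the geometric piece I would mirror the ODE computation from the introduction, but in base $3$: since $h$ is increasing and $3^n - 3^{n-1} = 2\cdot 3^{n-1}$,
\[
  \frac{3^{n-1}}{h(3^n)} = \frac{1}{2}\cdot\frac{3^n-3^{n-1}}{h(3^n)} \geq \frac{1}{2}\int_{3^{n-1}}^{3^n}\frac{dx}{h(x)},
\]
so that $\sum_{n \geq N} \frac{3^{n-1}}{h(3^n)} \geq \frac{1}{2}\int_{3^{N-1}}^{\infty}\frac{dx}{h(x)} = +\infty$ by \eqref{eq:h-osgood}. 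With these in hand, partition $\mathbb{N} = A \sqcup B$ where $A = \{n : \frac{3^{n-1}}{h(3^n)} \leq \frac1n\}$ and $B$ is the complement, so that by \eqref{eq:an} one has exactly $\sum_n a_n = \sum_{n\in A}\frac{3^{n-1}}{h(3^n)} + \sum_{n\in B}\frac1n$.

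I want to emphasize at the outset the step that is \emph{not} routine. One cannot simply argue that one of these two pieces inherits a divergence: the minimum of two divergent series can perfectly well converge (place the mass of each on complementary sparse sets). So the proof must use the specific structure of $h$, and the decisive feature is its monotonicity. Concretely, $h$ increasing makes $n \mapsto h(3^n)$ non-decreasing, which I would package as the following \textbf{ratchet estimate}: if $m \in A$, then $h(3^m) \geq m\,3^{m-1}$, and since $h$ never decreases, $h(3^n)\geq h(3^m)\geq m\,3^{m-1}$ for every $n \geq m$, whence
\[
  \frac{3^{n-1}}{h(3^n)} \leq \frac{3^{n-1}}{m\,3^{m-1}} = \frac{3^{\,n-m}}{m} \qquad (n \geq m).
\]
In words, once the geometric term has dropped below $\frac1n$ it can only climb back by a factor of $3$ per step, and $h$ can never fall again to let it recover cheaply.

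The main step is then a contradiction argument. Suppose $\sum a_n < +\infty$. Then $B$ is \emph{thin}, $\sum_{n\in B}\frac1n < +\infty$, while $\sum_{n\in A}\frac{3^{n-1}}{h(3^n)} < +\infty$; combined with the full geometric divergence this forces $\sum_{n\in B}\frac{3^{n-1}}{h(3^n)} = +\infty$. Now decompose $B$ into its maximal runs of consecutive integers; a run $\{k,\dots,k+L-1\}$ has left neighbor $k-1 \in A$, so the ratchet estimate bounds the run's contribution to the divergent $B$-series by the geometric sum $\sum_{j=1}^{L} 3^{\,j}/(k-1)$. The point I would then push is that threading these runs together under the non-decreasing $h$ prevents the runs from being too sparse: the plateau value of $h$ on the short $A$-gap after a run must already exceed the threshold at the next run's start, which caps how far the next left endpoint $k_{t+1}$ can lie beyond $k_t + L_t$. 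Quantitatively this should yield $\sum_{n\in B}\frac1n \geq \sum_t \log\!\big(k_{t+1}/k_t\big) = \log(\lim_t k_t / k_1) = +\infty$, contradicting thinness. Making this coupling precise — showing that the ratcheting of $h$ forces the runs of $B$ to carry positive logarithmic density — is exactly where the real work lies and is the step I expect to demand the most care; everything before it is bookkeeping.
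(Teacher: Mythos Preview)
Your diagnosis is exactly right --- the minimum of two divergent series need not diverge, so some structure of $h$ beyond the Osgood condition is essential --- but the step you flag as ``where the real work lies'' cannot be completed: under only the stated hypotheses the lemma is \emph{false}. Here is a counterexample. Define $b_n:=3^{n-1}/h(3^n)$ by declaring $b_n = 2\cdot 3^{\,n-3^t-t}$ for $3^{t-1}+t-1 \le n \le 3^t+t-1$ (these blocks tile $\mathbb{N}$ as $t=1,2,\dots$). Then $b_{n+1}/b_n\le 3$ everywhere (with equality except at the block boundaries, where $b$ drops sharply), so $h(3^n)=3^{n-1}/b_n$ is non-decreasing; any increasing interpolation of $h$ then satisfies \eqref{eq:h-osgood} because $\sum b_n=\infty$. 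One checks directly that $B:=\{n:b_n>1/n\}=\bigcup_t\{3^t,\dots,3^t+t-1\}$, so $\sum_{n\in B}1/n\le\sum_t t\,3^{-t}<\infty$, while on the complementary gaps $\sum_{n\in A}b_n\le\sum_t 3^{-t}<\infty$; hence $\sum_n a_n<\infty$. Your ratchet bound $b_{m+j}\le 3^{j}/m$ is correct but too weak: it permits $b$ to regrow geometrically across a long $A$-gap, so a $B$-run of length $t$ near index $3^t$ can contribute $\approx 1$ to $\sum_B b_n$ but only $\approx t\,3^{-t}$ to $\sum_B 1/n$. The ``cap'' on $k_{t+1}$ you anticipate never materializes: from $(k_t{+}L_t)\,3^{k_t+L_t-1}\le h(3^{k_{t+1}})<k_{t+1}\,3^{k_{t+1}-1}$ one recovers only the triviality $k_{t+1}>k_t+L_t$.

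For comparison, the paper's own proof is a one-line appeal to ``Cauchy's condensation theorem'' and has the same gap; condensation alone does not control the $\min$. What makes everything go through in the examples the paper actually uses (Example~\ref{ex:1}) is the unstated extra hypothesis that $u\mapsto h(u)/u$ is eventually non-decreasing. Under that hypothesis $b_n$ is non-increasing, hence so is $a_n=\min(b_n,1/n)$; if $\sum a_n$ converged then $n\,a_n\to 0$, forcing $n b_n<1$ eventually and thus $a_n=b_n$ eventually, whence $\sum a_n=\infty$ after all --- a contradiction. That is the clean repair, and presumably the intended argument behind the condensation reference.
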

\begin{proof}
  First, we observe that
  \begin{equation}
    3^{n+1} - 3^{n} = (3-1) 3^{n} =(2) (3^{n}) = 6(3^{n-1}).
  \end{equation}
  Therefore, we can bound
  \begin{align*}
    \frac{3^{n-1}}{h(3^n)}
    \geq \frac{3^{n+1} - 3^n}{6h(3^n)}
    \geq \frac{1}{6} \int_{3^n}^{3^{n+1}} \frac{1}{h(x)}dx.
  \end{align*}
  The last equality holds because of the assumption that $h$ is increasing.
  Therefore,
  \begin{equation}
    \sum_{n=1}^\infty \frac{3^{n-1}}{h(3^n)} \geq \frac{1}{6} \int_{3}^\infty \frac{1}{h(x)}dx = +\infty.
  \end{equation}

  Cauchy's condensation theorem guarantees that
   \begin{equation}
     \sum_{n=1}^\infty a_n = \sum_{n=1}^\infty \min\left\{\frac{3^{n-1}}{h(3^{n})}, \frac{1}{n} \right\} = +\infty.
   \end{equation}
\end{proof}

\section{Proof of the main result} \label{S:proof}
Notice that for $n \in \mathbb{N}$ and $t >0$,
  \begin{align}
    u(\tau_{n-1} + t) = &S(t) u(\tau_{n-1}) + \int_{\tau_{n-1}}^{t + \tau_{n-1}} S(t + \tau_{n-1}-s) f(u( s))ds \nonumber\\
    &+ \int_{\tau_{n-1}}^{t + \tau_{n-1}} S(t+\tau_{n-1} -s) \sigma(u(s))dw(s) \nonumber\\
    =:& S(t) u (\tau_{n-1})  + I_n(t) + Z_n(t)
  \end{align}

  A main idea of the proof is that
  \begin{align*}
    \Pro(\tau_n - \tau_{n-1} < a_n) = \Pro \left(\sup_{t \in [0, (\tau_n - \tau_{n-1}) \wedge a_n] } |u(t + \tau_{n-1})|_{L^\infty} \geq 3^n \right).
  \end{align*}

  First we show that $|S(t)u(\tau_{n-1})|_{L^\infty}$ and $|I_n(t)|_{L^\infty}$ are uniformly bounded for $t \in [0,(\tau_n - \tau_{n-1})\wedge a_n]$. Then the proof simplifies to analysis of the $Z_n(t)$ terms.

\begin{lemma} \label{lem:contr}
  For any $n \in \mathbb{N}$ and $t>0$,
  \begin{equation}
    |S(t) u(\tau_{n-1})|_{L^\infty} \leq 3^{n-1}
  \end{equation}
  with probability one.
\end{lemma}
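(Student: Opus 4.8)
The plan is to reduce the bound to two standard ingredients: the almost-sure continuity in space and time of the local mild solution, and the contractivity of the Dirichlet semigroup $S(t)$ on $L^\infty$.

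First I would record the behavior of the solution at the stopping time. On the event $\{\tau_{n-1}<+\infty\}$ --- the only event relevant to the subsequent Borel--Cantelli argument --- the almost-sure continuity of the map $t \mapsto u(t) \in C(\bar D)$, which holds under Assumptions \ref{assum:A} and \ref{assum:noise} (this continuity is precisely what the moment bound of Lemma \ref{lem:moment} is designed to yield), together with the definition $\tau_{n-1}=\inf\{t>0: |u(t)|_{L^\infty}\ge 3^{n-1}\}$, forces
\begin{equation*}
  |u(\tau_{n-1})|_{L^\infty} = 3^{n-1}.
\end{equation*}
Hence, with probability one, $u(\tau_{n-1})$ is an element of $C(\bar D)\subset L^\infty$ of norm at most $3^{n-1}$.

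The second, and really the only substantial, step is the claim that $S(t)$ is a contraction on $L^\infty$, i.e. $|S(t)\phi|_{L^\infty}\le |\phi|_{L^\infty}$ for every $\phi \in L^\infty$ and $t\ge 0$. Granting this and applying it with $\phi = u(\tau_{n-1})$ immediately gives
\begin{equation*}
  |S(t)u(\tau_{n-1})|_{L^\infty} \le |u(\tau_{n-1})|_{L^\infty}\le 3^{n-1},
\end{equation*}
which is the assertion. To justify the contractivity I would invoke the sub-Markovian structure of the Dirichlet heat semigroup generated by the self-adjoint elliptic $\mathcal{A}$ of Assumption \ref{assum:A}: $S(t)$ admits a nonnegative integral kernel $p(t,x,y)\ge 0$ with $\int_D p(t,x,y)\,dy \le 1$, the inequality (rather than equality) reflecting the mass absorbed at the boundary under the Dirichlet condition. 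Then for a.e. $x$,
\begin{equation*}
  |S(t)\phi(x)| = \left| \int_D p(t,x,y)\phi(y)\,dy\right| \le |\phi|_{L^\infty}\int_D p(t,x,y)\,dy \le |\phi|_{L^\infty},
\end{equation*}
and taking the supremum over $x$ closes the estimate. Equivalently, this is nothing but the parabolic maximum principle for the Dirichlet problem.

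I expect the contractivity to be the main (though entirely standard) obstacle: one must be careful that the Dirichlet boundary condition produces the crucial bound $\int_D p(t,x,y)\,dy \le 1$, so that $S(t)$ introduces no growth. Everything else --- the measurability making the bound hold with probability one, and the inclusion $u(\tau_{n-1})\in C(\bar D)\subset L^\infty$ --- is immediate from the continuity of the solution and the mapping properties of $S(t)$ on $C(\bar D)$ already exploited in \cite{c-2003,c-2009}.
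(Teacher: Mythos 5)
Your proposal is correct and follows essentially the same route as the paper, which simply invokes the $L^\infty$-contractivity of $S(t)$ together with $|u(\tau_{n-1})|_{L^\infty}\le 3^{n-1}$ at the hitting time; you merely supply the standard justifications (continuity of $t\mapsto u(t)$ at the stopping time and the sub-Markovian kernel bound behind the contraction) that the paper leaves implicit.
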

\begin{proof}
  $S(t)$ is a contraction in the supremum norm . Therefore,
  \begin{equation}
    |S(t) u(\tau_{n-1})|_{L^\infty} \leq |u(\tau_{n-1})|_{L^\infty} \leq 3^{n-1}.
  \end{equation}
\end{proof}

\begin{lemma} \label{lem:int-bound}
For any $n >1$,
  \begin{equation}
    \sup_{t \in [0,(\tau_n - \tau_{n-1}) \wedge a_n]} \left|I_n(t) \right|_{L^\infty}
    \leq 3^{n-1}
  \end{equation}
  with probability one.
\end{lemma}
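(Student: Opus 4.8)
The plan is to exploit two elementary facts: that $S(t)$ is a contraction on $L^\infty$ (as recorded in Lemma \ref{lem:contr}), and that the integrand $f(u(s))$ is uniformly bounded by $h(3^n)$ on the relevant time window. This is the exact SPDE analogue of the ODE heuristic in the introduction, where boundedness of the drift by $h$ on the set $\{|v| \leq 3^n\}$ produced the increment lower bound $3^{n-1}/h(3^n)$.

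First I would observe that for $t \in [0,(\tau_n - \tau_{n-1}) \wedge a_n]$, every $s$ appearing in $I_n(t)$ satisfies $\tau_{n-1} \leq s \leq t + \tau_{n-1} \leq \tau_n$. By the definition of the tripling time $\tau_n$, this forces $|u(s)|_{L^\infty} \leq 3^n$ for all such $s$, with probability one. Since $h$ is increasing and $|f(u)| \leq h(|u|)$ pointwise by Assumption \ref{assum:f-sigma}, at each spatial point $x$ we get $|f(u(s,x))| \leq h(|u(s,x)|) \leq h(3^n)$, hence $|f(u(s))|_{L^\infty} \leq h(3^n)$.

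Next I would pass the supremum norm inside the time integral and apply the $L^\infty$-contraction property $|S(r)g|_{L^\infty} \leq |g|_{L^\infty}$ to obtain
\[
  |I_n(t)|_{L^\infty} \leq \int_{\tau_{n-1}}^{t+\tau_{n-1}} |S(t+\tau_{n-1}-s)f(u(s))|_{L^\infty}\,ds \leq \int_{\tau_{n-1}}^{t+\tau_{n-1}} |f(u(s))|_{L^\infty}\,ds \leq t\,h(3^n).
\]
Finally, using $t \leq a_n \leq 3^{n-1}/h(3^n)$ from the definition \eqref{eq:an} of $a_n$, I would conclude that $|I_n(t)|_{L^\infty} \leq a_n h(3^n) \leq 3^{n-1}$, uniformly in $t$ on the interval, which is the claim.

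There is essentially no hard step; the estimate is engineered to be routine, and the only point needing a little care is the boundary behavior at $s = \tau_n$, handled by the closed inequality $|u(s)|_{L^\infty} \leq 3^n$ holding up to and including $\tau_n$. The genuine content sits in the earlier choice of $a_n$: the factor $3^{n-1}/h(3^n)$ in \eqref{eq:an} is selected precisely so that $a_n h(3^n) \leq 3^{n-1}$, which guarantees that the drift term alone cannot carry the solution from level $3^{n-1}$ up to level $3^n$ within time $a_n$. This is what isolates the stochastic convolution $Z_n(t)$ as the only remaining term to control.
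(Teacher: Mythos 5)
Your proof is correct and follows essentially the same route as the paper's: bound $|f(u(s))|_{L^\infty}$ by $h(3^n)$ on $[\tau_{n-1},\tau_n]$ using the monotonicity of $h$ and the definition of the tripling time, push the $L^\infty$ norm through the integral via the contraction property of $S(t)$, and invoke $a_n \leq 3^{n-1}/h(3^n)$. No discrepancies to report.
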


\begin{proof}
  For any $t \in [0, (\tau_{n} - \tau_{n-1}) \wedge a_n]$,
  \begin{equation}
    \sup_{x \in D} |f(u(\tau_{n-1} + t,x))| \leq \sup_{x \in D} h(|u(t + \tau_{n-1},x)|) \leq h(3^n)
  \end{equation}
  because $h$ is increasing and $|u(t+\tau_{n-1})|\leq 3^n$ if $t \leq \tau_n - \tau_{n-1}$.
  $S(t)$ is a contraction semigroup in the space of continuous functions. Therefore,
  \begin{align} \label{eq:Lp-stoch-int}
    &\sup_{t \in [0,(\tau_n - \tau_{n-1}) \wedge a_n]} \left|\int_{\tau_{n-1}}^{\tau_{n-1}+t} S(\tau_{n-1} + t-s) f(u(s))ds \right|_{L^\infty}\nonumber\\
    &\leq \sup_{t \in [0,(\tau_n - \tau_{n-1}) \wedge a_n]} \int_{\tau_{n-1}}^{\tau_{n-1}+t} \left| f(u(s))\right|_{L^\infty}ds \nonumber\\
    &\leq a_n h(3^{n}) \leq 3^{n-1}.
  \end{align}
  The last inequality is from the definition of $a_n$ \eqref{eq:an}.
\end{proof}

The next moment estimate is due to Cerrai \cite{c-2003,c-2009}. The proof is in the appendix of this article.
\begin{lemma} \label{lem:moment}
  For any  $\zeta \in (0,1-\eta)$, $p> \max\left\{\frac{2}{1-\eta-\zeta}, \frac{d}{\zeta}\right\}$, where $d$ is the spatial dimension, there exists $C = C(\zeta,p)>0$ such that for any $\e>0, M>0$ and any adapted process $\Phi :[0,T]\times D \times \Omega \to \mathbb{R}$ such that $\Pro(\sup_{s \in [0,\e]}|\Phi(s)|_{L^\infty} \leq M)=1$,
  \begin{equation}
    \E \sup_{t \in [0,\e]}\left|\int_0^t S(t-s)\Phi(s)dw(s)\right|_{L^\infty}^p \leq C M^p \e^{\frac{p(1-\eta-\zeta)}{2}}.
  \end{equation}
\end{lemma}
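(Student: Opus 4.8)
The plan is to use the factorization method of Da Prato--Kwapie\'n--Zabczyk. Fix $\kappa\in(0,1)$ (to be chosen), and using the identity $\int_s^t(t-\rho)^{\kappa-1}(\rho-s)^{-\kappa}d\rho=\pi/\sin(\pi\kappa)$ together with the stochastic Fubini theorem, I would rewrite the stochastic convolution as a pathwise integral
\[
  \int_0^t S(t-s)\Phi(s)dw(s)=\frac{\sin(\pi\kappa)}{\pi}\int_0^t(t-s)^{\kappa-1}S(t-s)Y_\kappa(s)\,ds,\qquad Y_\kappa(s):=\int_0^s(s-r)^{-\kappa}S(s-r)\Phi(r)dw(r).
\]
The advantage is that the supremum over $t\in[0,\e]$ now falls on a deterministic convolution of $Y_\kappa$, so the whole estimate reduces to (i) an $L^p$ moment bound on $Y_\kappa(s)$, and (ii) a smoothing estimate turning that $L^p$ bound into an $L^\infty$ bound via the factorization integral. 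Throughout I write $G_\tau(x,y)=\sum_k e^{-\alpha_k\tau}e_k(x)e_k(y)$ for the kernel of $S(\tau)$, so that $[S(\tau)(\Phi(r)e_j)](x)=\int_D G_\tau(x,y)\Phi(r,y)e_j(y)dy$.

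The heart of the argument is the pointwise coefficient estimate
\[
  \Lambda(\tau,x):=\sum_{j=1}^\infty\lambda_j^2\bigl|[S(\tau)(\Phi(r)e_j)](x)\bigr|^2\le CM^2\tau^{-\eta},\qquad\tau>0,\ x\in D,
\]
which is exactly where Assumption \ref{assum:noise} and the constant $\eta=\theta(q-2)/q$ enter. For $q<\infty$ I would split the sum by H\"older in $j$ with exponents $q/2$ and $q/(q-2)$, writing $b_j:=|[S(\tau)(\Phi(r)e_j)](x)|$, to obtain $\sum_j\lambda_j^2 b_j^2\le(\sum_j\lambda_j^q b_j^2)^{2/q}(\sum_j b_j^2)^{(q-2)/q}$. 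The first factor is controlled using that $S(\tau)$ is an $L^\infty$-contraction, giving $b_j\le M|e_j|_{L^\infty}$ and hence $\sum_j\lambda_j^q b_j^2\le M^2\sum_j\lambda_j^q|e_j|_{L^\infty}^2<\infty$. The second factor is handled by Parseval in $y$: since the $e_j$ form an orthonormal basis of $L^2(D)$, $\sum_j b_j^2=\int_D G_\tau(x,y)^2\Phi(r,y)^2dy\le M^2 G_{2\tau}(x,x)=M^2\sum_k e^{-2\alpha_k\tau}e_k(x)^2$, and the elementary bound $e^{-2\alpha_k\tau}\le C\tau^{-\theta}\alpha_k^{-\theta}$ together with $\sum_k\alpha_k^{-\theta}|e_k|_{L^\infty}^2<\infty$ yields $\sum_j b_j^2\le CM^2\tau^{-\theta}$. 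Multiplying the two factors produces $\Lambda(\tau,x)\le CM^2\tau^{-\theta(q-2)/q}=CM^2\tau^{-\eta}$, and the case $q=\infty$ follows directly from the Parseval step with $\eta=\theta$.

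With $\Lambda$ in hand, the Burkholder--Davis--Gundy inequality applied at fixed $x$ to the real martingale $r\mapsto\int_0^r\cdots d\beta_j$ gives $\E|Y_\kappa(s,x)|^p\le C_p\E\bigl(\int_0^s(s-r)^{-2\kappa}\Lambda(s-r,x)dr\bigr)^{p/2}\le C_pM^p s^{p(1-2\kappa-\eta)/2}$, which is finite and integrable precisely when $2\kappa+\eta<1$, i.e. $\kappa<(1-\eta)/2$; integrating in $x$ keeps the same power of $s$. To recover the $L^\infty$ bound I would invoke ultracontractivity in the deliberately suboptimal form $|S(\tau)g|_{L^\infty}\le C\tau^{-\zeta/2}|g|_{L^p}$, which is legitimate because $p>d/\zeta$ forces $\zeta/2\ge d/(2p)$. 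Then for $t\le\e$, $|Z(t)|_{L^\infty}\le c_\kappa\int_0^t(t-s)^{\kappa-1-\zeta/2}|Y_\kappa(s)|_{L^p}ds$, and H\"older in time with conjugate exponents $p,p'$ gives a convergent singular kernel provided $(\kappa-1-\zeta/2)p'>-1$, i.e. $\kappa>\zeta/2+1/p$. Choosing $\kappa\in(\zeta/2+1/p,\,(1-\eta)/2)$ — an interval that is nonempty exactly when $p>2/(1-\eta-\zeta)$, matching the hypothesis — raising to the $p$-th power, and taking expectations, the powers of $\e$ combine as $\alpha p-\tfrac{p\zeta}{2}+\tfrac{p(1-2\kappa-\eta)}{2}$ with $\kappa$ in place of $\alpha$, collapsing to exactly $\tfrac{p}{2}(1-\eta-\zeta)$, which is the claimed exponent.

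The main obstacle is the coefficient estimate on $\Lambda(\tau,x)$: everything downstream is a careful but routine bookkeeping of exponents, but getting the sharp rate $\tau^{-\eta}$ uniformly in $x$ requires interpolating, via H\"older in the mode index $j$, between the $L^\infty$-contractivity of $S(\tau)$ (which controls the strongly weighted $\lambda_j^q$ sum) and the Parseval/kernel bound $G_{2\tau}(x,x)\le C\tau^{-\theta}$ (which supplies the time singularity). It is this interpolation that ties together the two summability hypotheses of Assumption \ref{assum:noise} and makes the constant $\eta=\theta(q-2)/q$ appear; the remaining delicacy is simply to verify that the two constraints $p>d/\zeta$ and $p>2/(1-\eta-\zeta)$ are precisely what is needed to select a valid factorization exponent $\kappa$ while producing the advertised power of $\e$.
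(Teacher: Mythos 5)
Your proposal is correct and follows essentially the same route as the paper's appendix proof: the Da Prato--Zabczyk factorization, the H\"older interpolation in the mode index $j$ between the $\lambda_j^q$-summability and the Parseval/kernel bound to get the rate $\tau^{-\eta}$ (the paper's Lemmas \ref{lem:sum-of-squares} and \ref{lem:weighted-squares}), BDG at fixed $x$, and then H\"older in time with the identical exponent bookkeeping and the same admissibility window for the factorization parameter. The only cosmetic difference is that you invoke the composite bound $|S(\tau)g|_{L^\infty}\leq C\tau^{-\zeta/2}|g|_{L^p}$ directly as suboptimal ultracontractivity, whereas the paper obtains it in two steps via the smoothing estimate $|S(\tau)\varphi|_{W^{\zeta,p}(D)}\leq C\tau^{-\zeta/2}|\varphi|_{L^p}$ and the fractional Sobolev embedding $W^{\zeta,p}(D)\hookrightarrow C(\bar D)$ for $\zeta p>d$.
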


\begin{lemma} \label{lem:prob-tau<a}
There exist $C>0$ and $r>1$ such that for all $n \in \mathbb{N}$,
  \begin{equation}
    \Pro(\tau_n - \tau_{n-1} < a_n) \leq C n^{-r}.
  \end{equation}
\end{lemma}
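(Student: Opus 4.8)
The plan is to use the decomposition $u(\tau_{n-1}+t) = S(t)u(\tau_{n-1}) + I_n(t) + Z_n(t)$ together with the identity
\[
\Pro(\tau_n - \tau_{n-1} < a_n) = \Pro\left(\sup_{t \in [0,(\tau_n-\tau_{n-1})\wedge a_n]}|u(t+\tau_{n-1})|_{L^\infty} \geq 3^n\right)
\]
to reduce the whole estimate to a tail bound on the stochastic convolution $Z_n$. By Lemma \ref{lem:contr} and Lemma \ref{lem:int-bound}, on the interval $[0,(\tau_n-\tau_{n-1})\wedge a_n]$ the first two terms are each bounded by $3^{n-1}$ in $L^\infty$, so their sum is at most $2\cdot 3^{n-1}$. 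Hence the event $\{|u(t+\tau_{n-1})|_{L^\infty}\geq 3^n\}$ forces $|Z_n(t)|_{L^\infty}\geq 3^n - 2\cdot3^{n-1} = 3^{n-1}$, giving
\[
\Pro(\tau_n-\tau_{n-1}<a_n) \leq \Pro\left(\sup_{t\in[0,(\tau_n-\tau_{n-1})\wedge a_n]}|Z_n(t)|_{L^\infty}\geq 3^{n-1}\right).
\]

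Next I would remove the random integration limit. For $s\in[\tau_{n-1},\tau_{n-1}+t]$ with $t\leq\tau_n-\tau_{n-1}$ we have $|u(s)|_{L^\infty}\leq 3^n$ and $u(s)=u_n(s)$, so $\sigma(u(s))=\sigma_n(u_n(s))$; thus on this event $Z_n(t)$ agrees with the cutoff convolution $\tilde Z_n(t):=\int_{\tau_{n-1}}^{\tau_{n-1}+t}S(\tau_{n-1}+t-s)\sigma_n(u_n(s))dw(s)$, whose integrand is bounded \emph{deterministically} by $M_n := \sup_{|v|\leq 3^n}|\sigma(v)|$. Enlarging the random horizon to the deterministic $a_n$ then yields
\[
\Pro(\tau_n-\tau_{n-1}<a_n)\leq\Pro\left(\sup_{t\in[0,a_n]}|\tilde Z_n(t)|_{L^\infty}\geq 3^{n-1}\right).
\]
The main obstacle is to legitimately apply Lemma \ref{lem:moment}, which is stated for integrals starting at time $0$ with a deterministically bounded adapted integrand, to this time-shifted object. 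I would invoke the strong Markov property at the stopping time $\tau_{n-1}$: the shifted noise $\tilde w(r):=w(\tau_{n-1}+r)-w(\tau_{n-1})$ is again a Wiener process with respect to $\mathcal{F}_{\tau_{n-1}+r}$, and the change of variables $s=\tau_{n-1}+r$ rewrites $\tilde Z_n(t)=\int_0^t S(t-r)\Phi(r)d\tilde w(r)$ with $\Phi(r)=\sigma_n(u_n(\tau_{n-1}+r))$ adapted and bounded by $M_n$, placing the integral in exactly the form Lemma \ref{lem:moment} requires.

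With this in place, I would apply Markov's inequality and Lemma \ref{lem:moment} with horizon $\e=a_n$ and bound $M=M_n$: for $\zeta\in(0,1-\eta)$ and $p$ large,
\[
\Pro\left(\sup_{t\in[0,a_n]}|\tilde Z_n(t)|_{L^\infty}\geq 3^{n-1}\right) \leq \frac{\E\sup_{t\in[0,a_n]}|\tilde Z_n(t)|_{L^\infty}^p}{3^{(n-1)p}} \leq \frac{C M_n^p\, a_n^{\frac{p(1-\eta-\zeta)}{2}}}{3^{(n-1)p}}.
\]
The remainder is bookkeeping. Using $M_n\leq C(3^n)^{1-\gamma}(h(3^n))^\gamma$ from Assumption \ref{assum:f-sigma}, and splitting $a_n^{\frac{p(1-\eta-\zeta)}{2}}=a_n^{p\gamma}\,a_n^{p(\beta-\gamma)}$ with $\beta:=\frac{1-\eta-\zeta}{2}$, I would bound the first factor by $(3^{n-1}/h(3^n))^{p\gamma}$ via $a_n\leq 3^{n-1}/h(3^n)$ in \eqref{eq:an}, and the second by $n^{-p(\beta-\gamma)}$ via $a_n\leq 1/n$. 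The factors $(h(3^n))^{p\gamma}$ cancel, and after collecting the powers of $3$ all remaining $n$-dependence in the exponential factors cancels too, leaving a bound $C' n^{-p(\beta-\gamma)}$ with $C'$ independent of $n$. The proof closes by first choosing $\zeta$ small enough that $\gamma<\beta$ (possible since $\gamma<\frac{1-\eta}{2}$), then choosing $p$ large enough that simultaneously $p>\max\{2/(1-\eta-\zeta),\,d/\zeta\}$, so that Lemma \ref{lem:moment} applies, and $r:=p(\beta-\gamma)>1$.
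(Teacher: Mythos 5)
Your proposal is correct and follows essentially the same route as the paper's proof: the same decomposition into $S(t)u(\tau_{n-1})+I_n(t)+Z_n(t)$, the same replacement of $Z_n$ by the cutoff convolution $\tilde Z_n$, Chebyshev plus Lemma \ref{lem:moment} with $M_n\le (3^n)^{1-\gamma}(h(3^n))^\gamma$, and the same two uses of $a_n\le 3^{n-1}/h(3^n)$ and $a_n\le 1/n$ to extract $r=p\bigl(\tfrac{1-\eta-\zeta}{2}-\gamma\bigr)>1$. The only difference is that you explicitly justify applying Lemma \ref{lem:moment} to the integral started at the stopping time $\tau_{n-1}$ via the strong Markov property, a point the paper passes over silently; this is a welcome clarification, not a deviation.
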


\begin{proof}
  Recall that
  \begin{align}
    u(t + \tau_{n-1}) = &S(t) u(\tau_{n-1}) + \int_{\tau_{n-1}}^{t + \tau_{n-1}} S(t + \tau_{n-1}-s) f(u( s))ds \nonumber\\
    &+ \int_{\tau_{n-1}}^{t + \tau_{n-1}} S(\tau_{n-1} + t-s) \sigma(u(s))dw(s) \nonumber\\
    =:& S(t) u (\tau_{n-1})  + I_n(t) + Z_n(t)
  \end{align}
  By Lemma \ref{lem:contr},  $|S(t) u(\tau_{n-1})|_{L^\infty} \leq 3^{n-1}$ for all $t >0$. By Lemma \ref{lem:int-bound},  $|I_n(t)| \leq 3^{n-1}$ for $t \in [0,(\tau_n -\tau_{n-1})\wedge a_n]$. Therefore, the only way that $|u(\tau_{n-1} + t)|_{L^\infty}\geq 3^n$ for some $t \in [0,(\tau_n-\tau_{n-1}) \wedge a_n]$, is that that the stochastic integral must satisfy $\sup_{t \in [0,(\tau_n - \tau_{n-1})\wedge a_n]}|Z_n(t)| > 3^{n-1}$.

  Therefore,
  \begin{align*}
    \Pro(\tau_n - \tau_{n-1}< a_n)
    \leq \Pro\left( \sup_{t \in [0,a_n \wedge (\tau_n - \tau_{n-1})]} \left| Z_n(t) \right|_{L^\infty} \geq 3^{n-1} \right)
  \end{align*}
  Now define
  \begin{equation}
    \tilde{Z}_n(t) = \int_{\tau_{n-1}}^{t + \tau_{n-1}} S(\tau_{n-1} + t-s) \sigma_n(u_n(s))dw(s)
  \end{equation}
  where $\sigma_n$ and $u_n$ are defined in \eqref{eq:sigman} and \eqref{eq:u_n-mild}.
  Notice that $\tilde{Z}_n(t) = Z_n(t)$ for $ t \in [0,\tau_n-\tau_{n-1}]$.
  Therefore,
  \begin{align*}
    &\Pro(\tau_n -\tau_{n-1}< a_n)
    \leq \Pro\left( \sup_{t \in [0,a_n \wedge (\tau_n - \tau_{n-1})]} \left| Z_n(t) \right|_{L^\infty} \geq 3^{n-1} \right)\\
    &\leq  \Pro\left( \sup_{t \in [0,a_n \wedge (\tau_n - \tau_{n-1})]} \left| \tilde Z_n(t) \right|_{L^\infty} \geq 3^{n-1} \right)\\
    &\leq \Pro\left( \sup_{t \in [0,a_n ]} \left| \tilde Z_n(t) \right|_{L^\infty} \geq 3^{n-1} \right)
  \end{align*}
  By Chebyshev's inequality, for $p>1$ large enough,
  \begin{align*}
    &\Pro(\tau_n -\tau_{n-1}< a_n) \leq \Pro\left( \sup_{t \in [0,a_n ]} \left| \tilde Z_n(t) \right|_{L^\infty} \geq 3^{n-1} \right)\\
    &\leq 3^{-p(n-1)}\E \sup_{t \in [0,a_n]} \left| \int_{\tau_{n-1}}^{\tau_{n-1} +t} S(t+\tau_{n-1}-s)\sigma_n(u_n(s))dw(s) \right|^p.\\
  \end{align*}
   Given $\gamma \in \left( 0, \frac{1-\eta}{2} \right)$ from Assumption \ref{assum:f-sigma}, choose $\zeta \in \left(0, 1-\eta - 2\gamma\right)$, and $p> \max\{\frac{2}{1-\eta-2\gamma-\zeta}, \frac{d}{\zeta}\}$.
  By Lemma \ref{lem:moment}  and the fact that $|\sigma_n(u_n(s))|_{L^\infty} \leq  3^{n(1-\gamma)}(h(3^n))^{\gamma}$ for all $s \in [\tau_{n-1}, \tau_n]$,
  \begin{align}
     &\Pro(\tau_n - \tau_{n-1} < a_n) \nonumber\\
     &\leq C 3^{-p(n-1)} (a_n)^{p\left(\frac{1-\eta-\zeta}{2}\right)} \left( 3^{n (1-\gamma) p} (h(3^n))^{p\gamma} \right)\nonumber\\
     &\leq C (a_n)^{p\left( \frac{1-\eta-\zeta}{2}  - \gamma\right)}.
  \end{align}
  In the last step we used the definition that $a_n \leq \frac{3^{n-1}}{h(3^{n})}$ to see that $\left(\frac{h(3^n)}{3^n}\right)^{p\gamma} \leq 3^{-p \gamma} a_n^{-p \gamma}$.
  The constant $C$ depends only on the choices of $p$ and $\zeta$ and it does not depend on $n$. Therefore, because $a_n$ is defined to be less than $\frac{1}{n}$,
  \begin{equation}
    \Pro(\tau_n - \tau_{n-1} < a_n) \leq C n^{-\left(\frac{1-\eta-\zeta }{2}  -\gamma\right)p}
  \end{equation}
  where the exponent was chosen so that $r:=\left(\frac{1-\eta-\zeta}{2}-\gamma\right)p>1$.
\end{proof}

\begin{proof}[Proof of Theorem \ref{thm:main}]
  By Lemma \ref{lem:prob-tau<a}, There exist $C>0$ and $r>1$ such that
  \[\Pro(\tau_n - \tau_{n-1}<a_n) \leq C n^{-r}.\]
  Because $r>1$,
  \begin{equation}
    \sum_{n=1}^\infty \Pro(\tau_n - \tau_{n-1} < a_n) < +\infty.
  \end{equation}
  By the Borel-Cantelli Lemma, with probability one there exists $N(\omega)$ such that for all $n \geq N(\omega)$, $\tau_n(\omega) -\tau_{n-1}(\omega)\geq a_n$. Then because $\sum a_n =+\infty$, it follows that
  \begin{equation}
    \Pro(\sup_n \tau_n = +\infty) =  \Pro \left( \sum_{n=2}^\infty (\tau_n- \tau_{n-1}) = +\infty \right)=1.
  \end{equation}
  This proves that mild solutions to \eqref{eq:SPDE} are global in time.
\end{proof}

\begin{appendix}
  \section{Moments of the stochastic integral}
    Let $S(t)$ be the semigroup generated by $\mathcal{A}$ with the imposed boundary conditions.
    Let $\Phi:[0,T]\times D \times \Omega \to \mathbb{R}$ be an adapted process that is almost surely bounded in the sense that there exists $M>0$ such that
    \begin{equation}
      \Pro \left(\sup_{t \in [0,T]} \sup_{x \in D} |\Phi(t,x)|\leq M \right)=1.
    \end{equation}

    In this appendix, we prove the moment estimates of the stochastic convolution
    \begin{equation}
      Z(t) = \int_0^t S(t-s)\Phi(s)dw(s)
    \end{equation}
    stated in Lemma \ref{lem:moment}.

  \begin{lemma}[Factorization (Theorem 5.10 of \cite{dpz})] \label{lem:factorization}
    For any $\alpha \in \left( 0, \frac{1-\eta}{2}\right)$ where $\eta$ is given in \eqref{eq:cerrai-constant},
    \begin{equation}
      Z(t) = \frac{\sin(\alpha \pi)}{\pi} \int_0^t (t-s)^{\alpha-1}S(t-s) Z_\alpha(s)ds
    \end{equation}
    where
    \begin{equation}
      Z_\alpha(t) = \int_0^t (t-s)^{-\alpha} S(t-s) \Phi(s)dw(s).
    \end{equation}
  \end{lemma}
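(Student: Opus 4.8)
The plan is to verify the identity by inserting the definition of $Z_\alpha$ into the right-hand side and interchanging the order of integration with the stochastic Fubini theorem. Writing the inner stochastic integral explicitly, the right-hand side reads
\begin{equation*}
  \frac{\sin(\alpha\pi)}{\pi}\int_0^t (t-s)^{\alpha-1}S(t-s)\left(\int_0^s (s-r)^{-\alpha}S(s-r)\Phi(r)dw(r)\right)ds.
\end{equation*}
The decisive step is to move the deterministic $ds$-integral through the stochastic $dw(r)$-integral. Once this is licensed, the simplex $\{0\le r\le s\le t\}$ is reparametrized so that the outer integral runs in $dw(r)$ over $[0,t]$ and the inner integral runs in $ds$ over $[r,t]$, giving
\begin{equation*}
  \frac{\sin(\alpha\pi)}{\pi}\int_0^t \left(\int_r^t (t-s)^{\alpha-1}(s-r)^{-\alpha}S(t-s)S(s-r)ds\right)\Phi(r)dw(r).
\end{equation*}

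Next I would apply the semigroup law $S(t-s)S(s-r)=S(t-r)$ to pull the operator out of the $ds$-integral, leaving the scalar kernel $\int_r^t (t-s)^{\alpha-1}(s-r)^{-\alpha}ds$. The substitution $u=(s-r)/(t-r)$ collapses every factor of $(t-r)$ and reduces this to the Beta integral $\int_0^1 u^{-\alpha}(1-u)^{\alpha-1}du = B(1-\alpha,\alpha)=\Gamma(1-\alpha)\Gamma(\alpha)$, which by Euler's reflection formula equals $\pi/\sin(\alpha\pi)$, independently of $r$ and $t$. The prefactor $\sin(\alpha\pi)/\pi$ then cancels this constant exactly, and what remains is $\int_0^t S(t-r)\Phi(r)dw(r)=Z(t)$, which is precisely the claim. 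Note that the whole argument needs only $\alpha\in(0,1)$ for the two time weights to be integrable near their singularities, so the restriction $\alpha<\frac{1-\eta}{2}$ plays no role in this elementary computation.

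The main obstacle, and the only place where real care is required, is justifying the stochastic Fubini interchange, and this is exactly where $\alpha\in\left(0,\frac{1-\eta}{2}\right)$ enters. I would first confirm that $Z_\alpha(s)$ is a well-defined process by checking that the singular weight $(s-r)^{-\alpha}$ is square-integrable against the spatial covariance of $dw$ after the smoothing action of $S(s-r)$; the balance between the eigenvalue decay of $\mathcal{A}$ and the regularity of the noise encoded by $\eta$ in Assumption \ref{assum:noise} is what secures this integrability precisely when $\alpha<\frac{1-\eta}{2}$. With $\Phi$ almost surely bounded by $M$ and $S(t)$ a contraction on $C(\bar D)$, the integrand $(t-s)^{\alpha-1}(s-r)^{-\alpha}S(t-s)S(s-r)\Phi(r)$ is jointly measurable and satisfies the integrability hypotheses of the stochastic Fubini theorem (\cite{dpz}). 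Once the interchange is established, the remaining content is the Beta-function identity above, so the analytic substance of the lemma lies entirely in verifying the integrability that permits the Fubini step.
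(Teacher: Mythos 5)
Your proof is correct and is precisely the standard argument behind the result the paper cites without proof (Theorem 5.10 of Da Prato--Zabczyk): stochastic Fubini on the simplex, the semigroup law, and the Beta integral $\int_r^t (t-s)^{\alpha-1}(s-r)^{-\alpha}\,ds = \Gamma(\alpha)\Gamma(1-\alpha) = \pi/\sin(\alpha\pi)$. You also correctly locate the only substantive issue --- that $\alpha < \tfrac{1-\eta}{2}$ is needed not for the Beta computation but to make $Z_\alpha$ square-integrable (via the $t^{-\eta}$ bound of Lemma \ref{lem:weighted-squares}, one needs $2\alpha+\eta<1$) and thereby to license the Fubini interchange --- so there is nothing to add.
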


  \begin{lemma} \label{lem:sum-of-squares}
    For any $\varphi \in L^\infty$, $t>0$ and $x \in D$,
    \begin{equation}
      \sum_{j=1}^\infty \left((S(t) \phi e_j)(x) \right)^2 \leq C|\phi|_{L^\infty}^2 t^{-\theta}
    \end{equation}
    where $\theta>0$ is defined in Assumption \ref{assum:noise}.
  \end{lemma}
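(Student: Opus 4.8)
The plan is to recognize the sum as the squared $L^2$-norm of a single function via Parseval's identity, and then reduce everything to an on-diagonal bound for the heat kernel of $S(t)$.

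First I would write $S(t)$ through its spectral kernel $G_t(x,y) = \sum_{k} e^{-\alpha_k t} e_k(x) e_k(y)$, so that for fixed $t>0$ and $x \in D$ the quantity $(S(t)(\phi e_j))(x) = \int_D G_t(x,y)\phi(y) e_j(y)\,dy$ is precisely the $j$-th Fourier coefficient $\langle \psi_{x,t}, e_j\rangle_{L^2}$ of the function $\psi_{x,t}(y) := G_t(x,y)\phi(y)$. Since $\{e_j\}$ is a complete orthonormal basis of $L^2(D)$, Parseval's identity gives
\begin{equation*}
  \sum_{j=1}^\infty \left((S(t)(\phi e_j))(x)\right)^2 = |\psi_{x,t}|_{L^2}^2 = \int_D G_t(x,y)^2 \phi(y)^2\,dy \le |\phi|_{L^\infty}^2 \int_D G_t(x,y)^2\,dy.
\end{equation*}

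Next I would evaluate the remaining integral. Expanding $G_t(x,y)^2$ and using orthonormality $\int_D e_k e_l\,dy = \delta_{kl}$ collapses the double sum to the diagonal, yielding the standard identity $\int_D G_t(x,y)^2\,dy = \sum_{k=1}^\infty e^{-2\alpha_k t} e_k(x)^2$. It therefore remains to show $\sum_k e^{-2\alpha_k t} e_k(x)^2 \le C t^{-\theta}$ uniformly in $x$, and this is exactly where Assumption \ref{assum:noise} enters. I would use the elementary fact that $C_\theta := \sup_{z \ge 0} z^\theta e^{-2z} < +\infty$ with $z = \alpha_k t$ to obtain $e^{-2\alpha_k t} \le C_\theta \alpha_k^{-\theta} t^{-\theta}$ (the eigenvalues are strictly positive, since otherwise the spectral sum in Assumption \ref{assum:noise} would diverge). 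Summing then gives
\begin{equation*}
  \sum_{k=1}^\infty e^{-2\alpha_k t} e_k(x)^2 \le C_\theta t^{-\theta} \sum_{k=1}^\infty \alpha_k^{-\theta} e_k(x)^2 \le C_\theta t^{-\theta}\sum_{k=1}^\infty \alpha_k^{-\theta}|e_k|_{L^\infty}^2,
\end{equation*}
and the last sum is finite by hypothesis. Combining the two displays proves the claim with $C = C_\theta \sum_k \alpha_k^{-\theta}|e_k|_{L^\infty}^2$.

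I do not expect a genuine obstacle here: the argument is the Parseval reduction followed by an on-diagonal kernel estimate. The only points requiring care are the justification of interchanging the infinite sum with the integral (handled by monotone or dominated convergence, since all terms are nonnegative, or equivalently by working with finite partial sums and passing to the limit), and the observation that the summability condition $\sum_k \alpha_k^{-\theta}|e_k|_{L^\infty}^2 < +\infty$ from Assumption \ref{assum:noise} is precisely the input that converts the pointwise bound $e^{-2\alpha_k t} \le C_\theta \alpha_k^{-\theta} t^{-\theta}$ into the claimed $t^{-\theta}$ blow-up rate, uniformly in $x \in D$.
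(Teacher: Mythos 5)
Your argument is correct and is essentially identical to the paper's own proof: both use the kernel representation of $S(t)$, Parseval's identity over the orthonormal basis $\{e_j\}$ to collapse the sum to $\int_D K(t,x,y)^2\phi(y)^2\,dy$, the on-diagonal identity $\int_D K(t,x,y)^2\,dy=\sum_k e^{-2\alpha_k t}e_k(x)^2$, and the elementary bound $\sup_{z>0}z^\theta e^{-z}<\infty$ combined with the summability hypothesis $\sum_k \alpha_k^{-\theta}|e_k|_{L^\infty}^2<\infty$. Your added remarks on interchanging sum and integral and on the strict positivity of the eigenvalues are fine and only make the argument slightly more careful than the paper's.
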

  \begin{proof}
    The semigroup has a kernel representation
    \begin{equation}
      (S(t)\phi)(x) = \int_D K(t,x,y) \phi(y)dy
    \end{equation}
    where
    \begin{equation}
      K(t,x,y) = \sum_{k=1}^\infty e^{-\alpha_k t} e_k(x) e_k(y).
    \end{equation}
    Therefore,
    \begin{align*}
      \sum_{j=1}^\infty \left(S(t) \phi e_j \right)^2
      = \sum_{j=1}^\infty \left(\int_D K(t,x,y)\phi(y)e_j(y) \right)^2.
    \end{align*}
    Because the $\{e_j\}$ form a complete orthonormal basis of $L^2(D)$,
    the above expression equals
    \begin{equation*}
      = \int_D (K(t,x,y))^2 |\phi(y)|^2dy
      \leq |\phi|_{L^\infty}^2 \int_D (K(t,x,y))^2 dy.
    \end{equation*}
    Now we estimate $\int_D (K(t,x,y))^2 dy$. Because the $\{e_k\}$ form a complete orthonormal basis,
    \begin{align*}
      \int_D (K(t,x,y))^2 dy
      = \sum_{k=1}^\infty e^{-2\alpha_k t} |e_k(x)|^2
      \leq  \sum_{k=1}^\infty e^{-2\alpha_k t} |e_k|_{L^\infty}^2.
    \end{align*}
    Finally, because the constant $c_\theta:= \sup_{x>0} x^\theta e^{-x}<+\infty$,
    \begin{equation*}
      \sum_{k=1}^\infty e^{-2\alpha t} |e_k|_{L^\infty}^2\leq c_\theta (2t)^{-\theta} \sum_{k=1}^\infty \alpha_k^{-\theta} |e_k|_{L^\infty}^2,
    \end{equation*}
    which converges by Assumption \ref{assum:noise}.
  \end{proof}

  \begin{lemma} \label{lem:weighted-squares}
    For any $\varphi \in L^\infty$, $t>0$ and $x \in D$,
    \begin{equation}
      \sum_{j=1}^\infty \lambda_j^2 \left((S(t) \phi e_j)(x) \right)^2 \leq C|\phi|_{L^\infty}^2 t^{-\eta}
    \end{equation}
    where $\eta = \frac{\theta(q-2)}{q}$ is defined in \eqref{eq:cerrai-constant}
  \end{lemma}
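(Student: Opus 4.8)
The plan is to obtain the weighted estimate by interpolating between two facts already available: Lemma \ref{lem:sum-of-squares}, which controls the \emph{unweighted} sum $\sum_j ((S(t)\phi e_j)(x))^2$ by $C|\phi|_{L^\infty}^2 t^{-\theta}$ but carries no $\lambda_j$ factors, and the first summability hypothesis of Assumption \ref{assum:noise}, namely $\sum_j \lambda_j^q |e_j|_{L^\infty}^2 < +\infty$, which supplies the $\lambda_j$ weights but no decay in $t$. The definition $\eta = \theta(q-2)/q$ is precisely the convex combination $\theta \cdot \frac{q-2}{q} + 0 \cdot \frac{2}{q}$ of the exponents $\theta$ and $0$ produced by a H\"older split with the conjugate pair $q/2$ and $q/(q-2)$, so H\"older's inequality is the natural tool. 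The case $q = +\infty$ is immediate: since $\sup_j \lambda_j < +\infty$ and $\eta = \theta$, one pulls $\sup_j \lambda_j^2$ out of the sum and applies Lemma \ref{lem:sum-of-squares} directly.

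For $q \in (2,+\infty)$, write $a_j := (S(t)\phi e_j)(x)$. First I would record the pointwise bound
\[
|a_j| = |(S(t)(\phi e_j))(x)| \le |S(t)(\phi e_j)|_{L^\infty} \le |\phi e_j|_{L^\infty} \le |\phi|_{L^\infty}|e_j|_{L^\infty},
\]
which uses only that $S(t)$ is a contraction on $L^\infty$ (as in Lemma \ref{lem:contr}). Next comes the algebraic decomposition
\[
\lambda_j^2 a_j^2 = \left(\lambda_j^q a_j^2\right)^{2/q} \left(a_j^2\right)^{(q-2)/q},
\]
valid since $2/q + (q-2)/q = 1$. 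Applying H\"older's inequality with the conjugate exponents $q/2$ and $q/(q-2)$ gives
\[
\sum_{j=1}^\infty \lambda_j^2 a_j^2 \le \left(\sum_{j=1}^\infty \lambda_j^q a_j^2\right)^{2/q} \left(\sum_{j=1}^\infty a_j^2\right)^{(q-2)/q}.
\]
The first factor I would bound with the pointwise estimate: $\sum_j \lambda_j^q a_j^2 \le |\phi|_{L^\infty}^2 \sum_j \lambda_j^q |e_j|_{L^\infty}^2$, finite by Assumption \ref{assum:noise}; the second factor is bounded by $(C|\phi|_{L^\infty}^2 t^{-\theta})^{(q-2)/q}$ via Lemma \ref{lem:sum-of-squares}. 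Multiplying, the powers of $|\phi|_{L^\infty}$ recombine as $\frac{4}{q} + \frac{2(q-2)}{q} = 2$ and the power of $t$ becomes $-\frac{\theta(q-2)}{q} = -\eta$, which is exactly the claimed bound.

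I do not expect a genuine obstacle; the content lies entirely in recognizing the interpolation structure and matching the H\"older exponents to the definition of $\eta$, with the rest being bookkeeping of powers. The only points needing a little care are the endpoints: when $q = 2$ the exponent $q/(q-2)$ degenerates, but then $\eta = 0$ and the estimate follows directly from the pointwise bound together with $\sum_j \lambda_j^2 |e_j|_{L^\infty}^2 < +\infty$, while $q = +\infty$ is handled as above. Throughout, the constant $C$ depends only on $\theta$, $q$, and the fixed summability constants of Assumption \ref{assum:noise}, and in particular not on $t$, $x$, or $\phi$.
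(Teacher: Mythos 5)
Your proof is correct and follows essentially the same route as the paper: the H\"older split with conjugate exponents $q/2$ and $q/(q-2)$, the contraction bound $|(S(t)\phi e_j)(x)| \le |\phi|_{L^\infty}|e_j|_{L^\infty}$ for the weighted factor, and Lemma \ref{lem:sum-of-squares} for the unweighted factor. Your explicit treatment of the endpoint cases $q=+\infty$ and $q=2$ is a small addition the paper omits, but the argument is otherwise identical.
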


  \begin{proof}
    By the H\"older inequality with exponents $\frac{q}{2}$ and $\frac{q}{q-2}$,
    \begin{align*}
      &= \sum_{j=1}^\infty \lambda_j^2 \left((S(t) \phi e_j)(x) \right)^2\\
      &\leq  \left(\sum_{j=1}^\infty \lambda_j^q \left((S(t) \phi e_j)(x) \right)^2
  \right)^{\frac{2}{q}} \left(\sum_{j=1}^\infty \left((S(t) \phi e_j)(x) \right)^2
 \right)^{\frac{q-2}{q}}.
    \end{align*}
    Because $S(t)$ is a contraction semigroup, we bound
    \[|(S(t)\phi e_j)(x)| \leq |\phi|_{L^\infty}|e_j|_{L^\infty}\]
    in the first term of the above product.
    We apply Lemma \ref{lem:sum-of-squares} to bound the second term in the product. Consequently,
    \begin{equation}
      \sum_{j=1}^\infty \lambda_j^2 \left((S(t) \phi e_j)(x) \right)^2
      \leq |\phi|_{L^\infty}^2  C t^{-\frac{\theta(q-2)}{q}} \leq  C|\phi|_{L^\infty}^2 t^{-\eta}.
    \end{equation}
  \end{proof}

  \begin{lemma} \label{lem:Lp-bound-fixed-x}
    For any $\alpha \in (0, \frac{1-\eta}{2})$ and $p\geq 2$, there exists $C = C(p,\alpha)>0$ such that for any adapted $\Phi(t,x,\omega)$ satisfying $\Pro(\sup_{x \in D} \sup_{t \in [0,T]} |\Phi(t,x)|\leq M) = 1$, and any fixed $t>0$, $x \in D$,
    \begin{equation}
      \E \left|Z_\alpha(t,x) \right|^p \leq C M^p t^{\frac{p}{2}(1 - 2\alpha - \eta)}
    \end{equation}

  \end{lemma}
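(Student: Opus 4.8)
The plan is to fix $t>0$ and $x \in D$ and treat $Z_\alpha(t,x)$ as an ordinary scalar-valued It\^o integral against the independent Brownian motions $\beta_j$, to which the Burkholder--Davis--Gundy (BDG) inequality applies. Expanding $dw(s) = \sum_j \lambda_j e_j d\beta_j(s)$ and using that $S(t-s)$ acts through its kernel, one writes
\[
  Z_\alpha(t,x) = \sum_{j=1}^\infty \int_0^t (t-s)^{-\alpha} \lambda_j \big(S(t-s)[\Phi(s)e_j]\big)(x) \, d\beta_j(s).
\]
Since $t$ enters only as a fixed parameter, the map $r \mapsto \sum_j \int_0^r (t-s)^{-\alpha}\lambda_j (S(t-s)[\Phi(s)e_j])(x)\,d\beta_j(s)$ is a continuous martingale in $r$, and $Z_\alpha(t,x)$ is its value at $r=t$. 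Its quadratic variation integrand is obtained by squaring and summing over $j$.

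First I would apply the BDG inequality to this martingale at time $t$. Because $p \geq 2$, there is a constant $C_p>0$ with
\[
  \E\big|Z_\alpha(t,x)\big|^p \leq C_p\, \E \left( \int_0^t (t-s)^{-2\alpha} \sum_{j=1}^\infty \lambda_j^2 \big((S(t-s)[\Phi(s)e_j])(x)\big)^2 \, ds \right)^{\!p/2}.
\]
Next I would bound the inner sum pointwise in $s$ using Lemma \ref{lem:weighted-squares} with $\phi = \Phi(s)$ and time $t-s$, followed by the almost-sure bound $|\Phi(s)|_{L^\infty}\leq M$, giving
\[
  \sum_{j=1}^\infty \lambda_j^2 \big((S(t-s)[\Phi(s)e_j])(x)\big)^2 \leq C\,|\Phi(s)|_{L^\infty}^2 (t-s)^{-\eta} \leq C\,M^2 (t-s)^{-\eta}
\]
almost surely. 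Substituting this deterministic bound lets me pull the estimate outside the expectation and reduces the problem to the elementary integral
\[
  \int_0^t (t-s)^{-2\alpha-\eta}\, ds = \frac{t^{\,1-2\alpha-\eta}}{1-2\alpha-\eta},
\]
which is finite precisely because $\alpha < \tfrac{1-\eta}{2}$ forces $2\alpha+\eta<1$. Raising the whole bound to the power $p/2$ and absorbing all numerical factors into a single $C=C(p,\alpha)$ yields $\E|Z_\alpha(t,x)|^p \leq C M^p t^{\frac{p}{2}(1-2\alpha-\eta)}$, as claimed.

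The computation is essentially forced once Lemma \ref{lem:weighted-squares} is in hand, so the only points requiring care are bookkeeping rather than genuine difficulty: one must check that the integrand is admissible so that BDG and the It\^o isometry apply, but this square-integrability is exactly the finiteness established by the Beta integral above, and the pointwise-in-$x$ evaluation of $S(t-s)[\Phi(s)e_j]$ is legitimate thanks to the smoothing kernel $K$ under Assumptions \ref{assum:A}--\ref{assum:noise}. The substantive input, and the place where the correlated-noise structure enters, is Lemma \ref{lem:weighted-squares}: it converts the spatial regularity of the noise into the time singularity $(t-s)^{-\eta}$, and the entire estimate hinges on that singularity being mild enough to combine with $(t-s)^{-2\alpha}$ and still integrate to a finite constant.
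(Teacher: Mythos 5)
Your proposal is correct and follows essentially the same route as the paper's proof: expand the stochastic convolution over the Brownian motions $\beta_j$, apply the BDG inequality, invoke Lemma \ref{lem:weighted-squares} to produce the $(t-s)^{-\eta}$ singularity, bound $|\Phi(s)|_{L^\infty}$ by $M$, and integrate the resulting $(t-s)^{-2\alpha-\eta}$ singularity, which is integrable since $\alpha < \frac{1-\eta}{2}$. The only difference is that you spell out the final elementary integral explicitly, which the paper leaves implicit.
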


  \begin{proof}
    By the definition of the noise in Assumption \ref{assum:noise},
    for $t>0$ and $x \in D$,
    \begin{equation}
      Z_\alpha(t,x) = \sum_{j=1}^\infty \int_0^t (t-s)^{-\alpha}\left(S(t-s)\Phi(s) e_j \right)(x) \lambda_j d\beta_j(s).
    \end{equation}
    By the BDG inequality, for $p\geq 2$,
    \begin{equation}
      \E \left| Z_\alpha(t,x)\right|^p \leq C \E \left( \int_0^t (t-s)^{-2\alpha}\sum_{j=1}^\infty \lambda_j^2 \left(\left(S(t-s)\Phi(s) e_j \right)(\xi)\right)^2ds \right)^{\frac{p}{2}}
    \end{equation}

    By Lemma \ref{lem:weighted-squares},
    \begin{equation}
      \E \left| Z_\alpha(t,x)\right|^p \leq C \E \left(\int_0^t (t-s)^{-2\alpha -\eta} |\Phi(s)|_{L^\infty}^2ds \right)^{\frac{p}{2}}.
    \end{equation}
    Because $\Phi$ was assumed to be uniformly bounded by $M$, we arrive at the result.
  \end{proof}

  \begin{proof}[Proof of Lemma \ref{lem:moment}]
    An immediate consequence of Lemma \ref{lem:Lp-bound-fixed-x} and the fact that $D$ is bounded is that
    \begin{equation} \label{eq:Lp-bound}
      \E|Z_\alpha(t)|_{L^p(D)}^p = \int_D \E |Z_\alpha(t,x)|^pdx \leq C M^p t^{\frac{p}{2}(1 - 2\alpha - \eta)}.
    \end{equation}
    For $\zeta \in (0,1)$ let $W^{\zeta,p}(D)$ be the fractional Sobolev space with norm
    \begin{equation} \label{eq:Sobolev-norm}
      |\varphi|_{W^{\zeta,p}(D)}^p := \int_D |\varphi(x)|^pdx + \int_D \int_D \frac{|\varphi(x) - \varphi(y)|^p}{|x-y|^{d + \zeta p}}dxdy.
    \end{equation}
    In the above expression, $d$ is the spatial dimension.
    By the Sobolev embedding theorem \cite[Theorem 8.2]{sobolev}, when $\zeta p>d$, $W^{\zeta,p}(D)$ embeds continuously into the space of continuous functions endowed with the supremum norm.
    Furthermore, the semigroup has the property that for $\zeta \in (0,1)$ and $p\geq 1$, there exists $C=C(s,p)$ such that
    \begin{equation} \label{eq:regularize}
      |S(t)\varphi|_{W^{\zeta,p}(D)} \leq C t^{-\frac{\zeta}{2}}|\varphi|_{L^p}.
    \end{equation}
    See, for example \cite[(2.4)]{c-2003}.
    Now choose $\alpha \in \left(0, \frac{1-\eta}{2}\right)$, $\zeta \in (0,\alpha)$ and $p > \max\left\{\frac{d}{\zeta}, \frac{1}{\alpha-\frac{\zeta}{2}}\right\}$.
    Therefore, using Lemma \ref{lem:factorization} and \eqref{eq:regularize}
    \begin{align}
      &|Z(t)|_{L^\infty}^p \leq C\left(\int_0^t (t-s)^{\alpha-1} |S(t-s) Z_\alpha(s)|_{W^{\zeta,p}(D)}ds\right)^p\nonumber\\
      &\leq C \left(\int_0^t (t-s)^{\alpha - 1 - \frac{\zeta}{2}} |Z_\alpha(s)|_{L^p(D)}ds\right)^p.
    \end{align}
    By our choices of $\alpha, \zeta,$ and $p$, and  H\"older inequality
    \begin{equation}
      \E \sup_{t \in [0,T]} |Z(t)|_{L^\infty}^p \leq C T^{p(\alpha - \frac{\zeta}{2}) -1}\E \int_0^T |Z_\alpha(s)|_{L^p}^pds
    \end{equation}
    By \eqref{eq:Lp-bound},
    \begin{equation}
      \E \sup_{t \in [0,T]} |Z(t)|_{L^\infty}^p \leq C M^p T^{ \left(\frac{p(1-\eta - \zeta)}{2} \right)}
    \end{equation}
  \end{proof}

\end{appendix}

\bibliographystyle{amsplain}
\bibliography{super-linear-bounded-domain}
\end{document}